\theoremstyle{plain}
\newtheorem{theorem}{Theorem}
\newcommand{\LapSmo}{\textsc{Laplacian Smoothing}\xspace}
\newcommand{\ori}{\texttt{ORI}\xspace}
\newcommand{\bfs}{\texttt{BFS}\xspace}
\newcommand{\rdr}{\texttt{RDR}\xspace}
\begin{document}
\title{Locality-Aware Laplacian Mesh Smoothing}

\author{Guillaume Aupy\\
Electrical Eng. and Computer Science\\
Vanderbilt University\\
Nashville, TN, USA\\
guillaume.aupy@vanderbilt.edu\\
   \and
JeongHyung Park \\
Computer Science and Engineering\\
Penn State University\\
University Park, PA, USA\\
jxp975@cse.psu.edu\\
   \and
Padma Raghavan\\
Electrical Eng. and Computer Science\\
Vanderbilt University\\
Nashville, TN, USA\\
guillaume.aupy@vanderbilt.edu\\   }

\maketitle

\begin{abstract}
In this paper, we propose a novel reordering scheme to improve the
performance of a Laplacian Mesh Smoothing (LMS). While the Laplacian smoothing
algorithm is well optimized and studied, we show how a simple reordering of the
vertices of the mesh can greatly improve the execution time of the smoothing
algorithm. The idea of our reordering is based on (i) the postulate that cache
misses are a very time consuming part of the execution of LMS, and (ii) the
study of the reuse distance patterns of various executions of the LMS algorithm. 

Our reordering algorithm is very simple but allows for huge performance
improvement. We ran it on a Westmere-EX platform and obtained a speedup of 75 on
32 cores compared to the single core execution without reordering, and a gain in
execution of 32\% on 32 cores compared to state of the art reordering.
Finally, we show that we leave little room for a better ordering by reducing
the L2 and L3 cache misses to a bare minimum.
\end{abstract}


\section{Introduction}
In HPC systems, when solving partial differential equations using finite element
methods for unstructured meshes in parallel on multicore system, high quality
meshes are required~\cite{DR:Mellor}. This is due to the fact that mesh quality
plays a significant role in both the accuracy of the solution produced by a PDE
solver, as well as the solver's execution time~\cite{MQ:Shewchuk}. However, when
attempting to parallelize the mesh smoothing application to obtain solution of
PDE efficiently, we do not obtain full performance scalability due to the
irregular memory accesses of the application~\cite{MQ:Zhou2010}.

When doing a computation over a graph, the vertices are stored in a
data-structure according to a specific ordering. Strout and
Hovland~\cite{MQ:Strout} noticed that the data-ordering of irregular HPC
applications impacted the performance of these applications. In particular, they
showed that a breadth-first search reordering heuristic outperformed all
existing ordering heuristics for mesh smoothing~\cite{MQ:Strout}.

\begin{figure}[h!]
  \centering
  \subfloat[Random ordering, average reuse distance: 90k, L1 cache miss rate: 2.18\%, execution time: 10.28s.]
	{\includegraphics[width=0.72\linewidth]{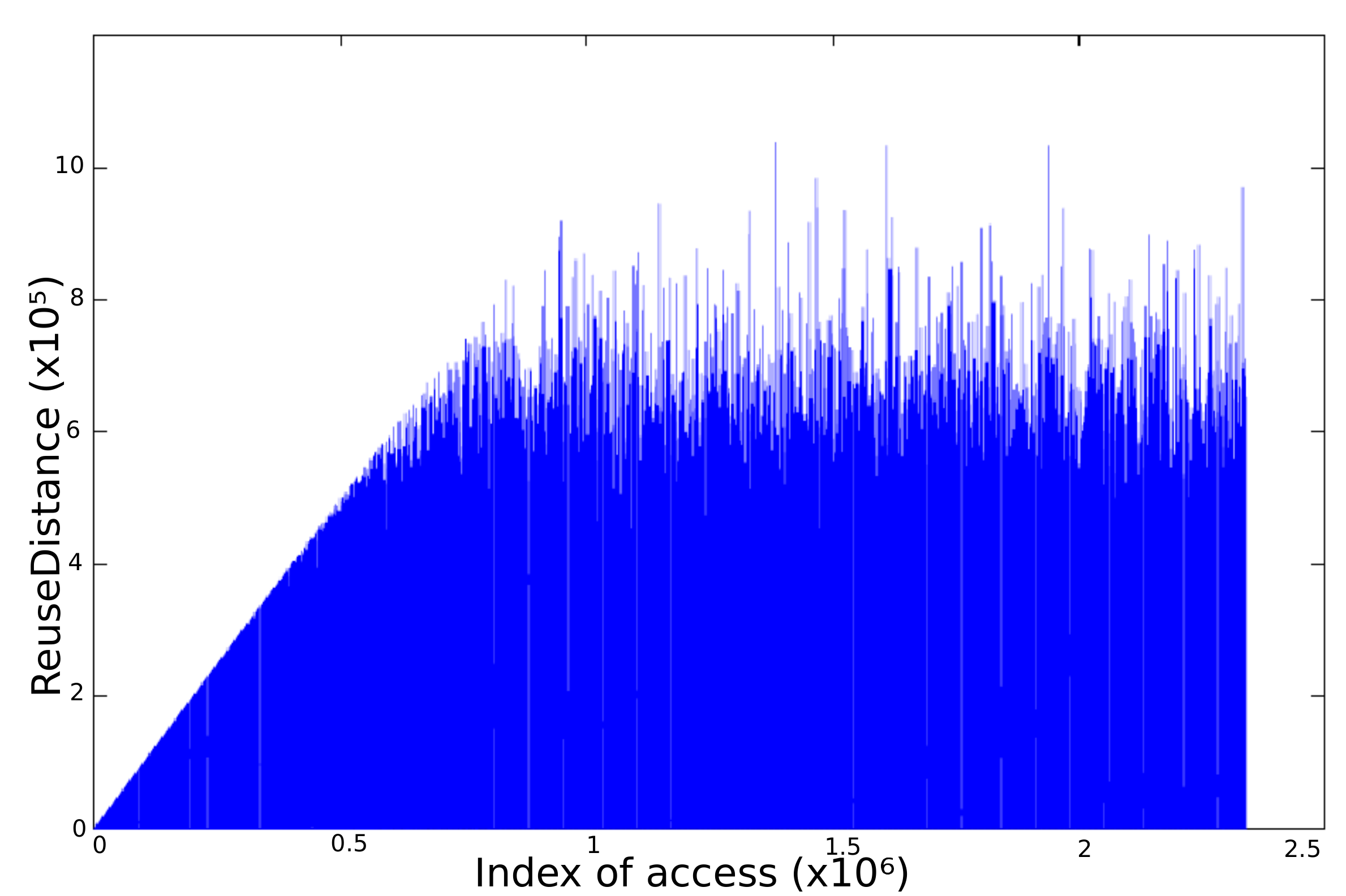}}\\
  \subfloat[Original ordering, average reuse distance: 4450, L1 cache miss rate: 0.71\%, execution time: 7.6s.]
	{\includegraphics[width=0.72\linewidth]{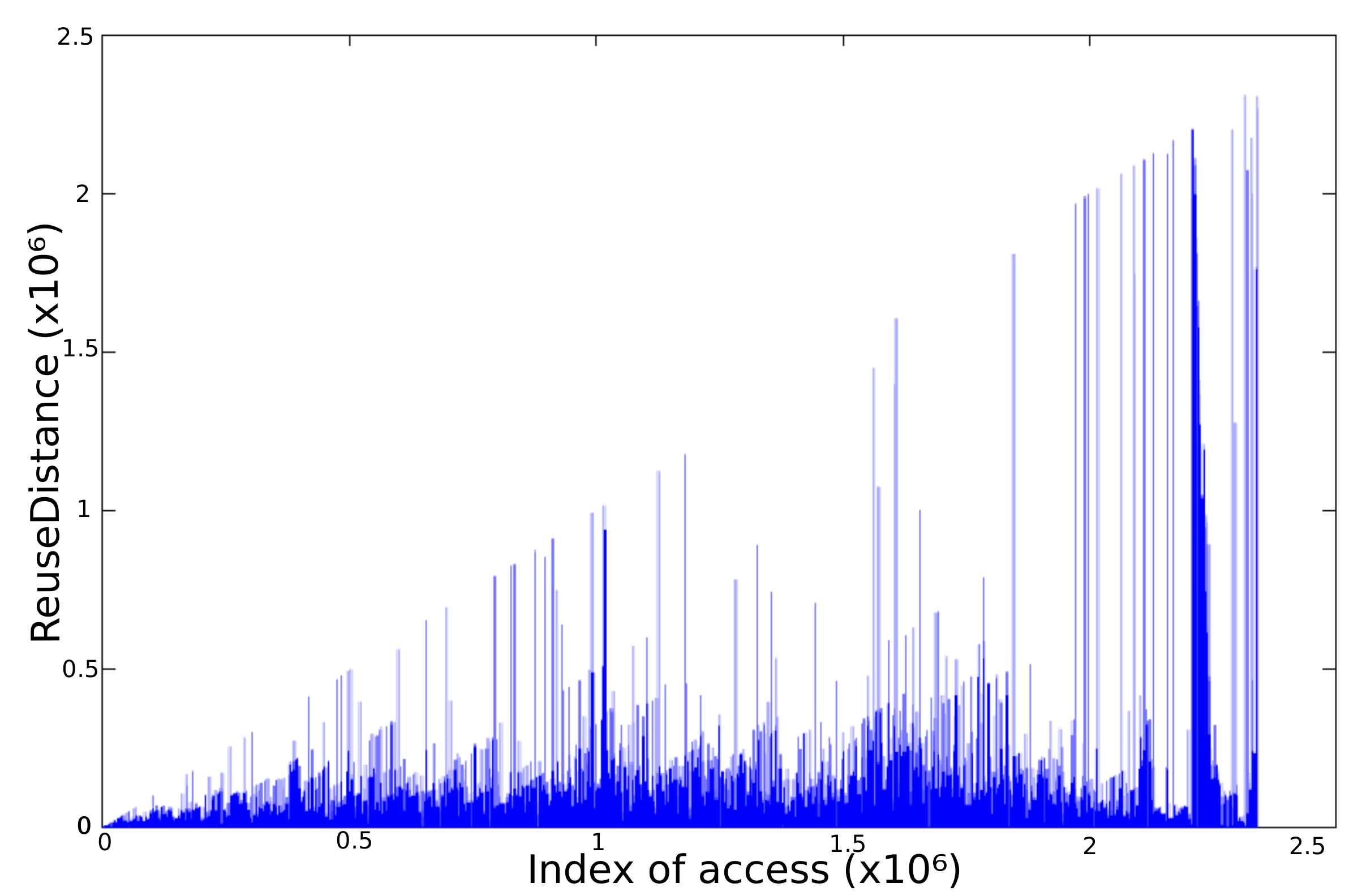}}\\
  \subfloat[\bfs ordering, average reuse distance: 2910, L1 cache miss rate: 0.59\%, execution time: 6.59s.]
	{\includegraphics[width=0.72\linewidth]{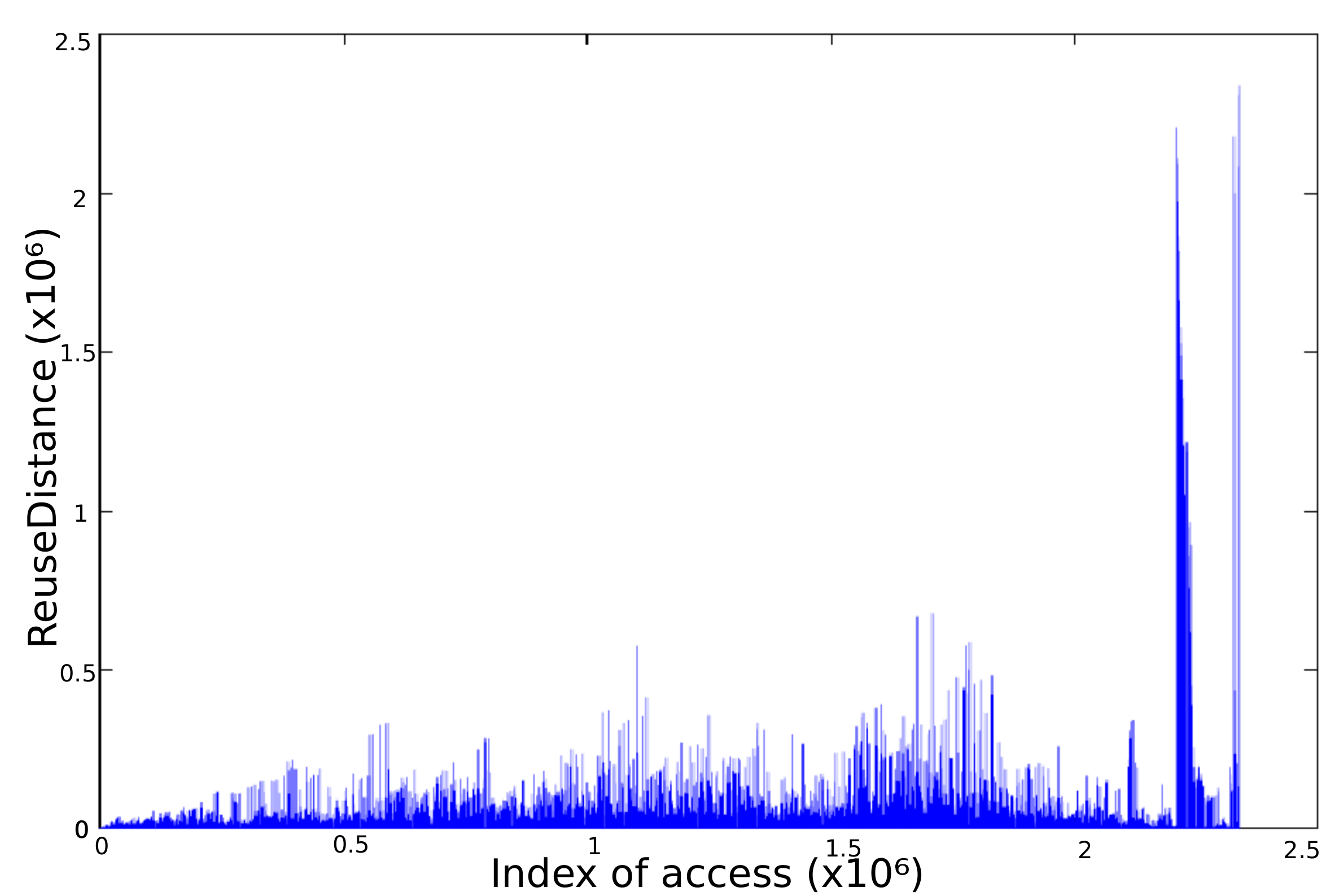}}
\caption{Reuse distance plays a significant role in cache performance of HPC applications
on multicore~\cite{RD:Beyls}. Simply put, the reuse distance is the number of
distinct data accesses between two consecutive accesses of the same data element.
We plot here the reuse distance of the first iteration of the LMS algorithm on
an {\em ocean} mesh for different orderings.}\label{RDreorderexe}
\vspace{-1cm}
\end{figure}

On an identical execution, different orderings have different effects
on the reuse distance and on the cache miss rate. The performance is impacted.
In Figure~\ref{RDreorderexe}, we show how different ordering impacts the average
reuse distance, the execution time and the cache misses of an execution
of LMS. In particular, we compare three different orderings: a random ordering
(that unsurprisingly has the worse performance), the original ordering of the
mesh (given by the mesh creation algorithm~\cite{shewchuk96b}), and 	the ordering
of the \bfs algorithm introduced by Strout and Hovland~\cite{MQ:Strout}. This
strongly supports the importance of choosing a good ordering.

In this work, we propose a novel ordering algorithm that outperforms that of
Strout and Hovland~\cite{MQ:Strout}. We give some insights on the reason of its
performance by studying thoroughly the reuse distance and cache misses of this
algorithm on various meshes. 
Finally, while our algorithm is cache-oblivious as it focuses in reducing the
reuse distance independently of the cache size, we show that it reduces the L2
and L3 cache misses to a bare minimum on the platform we ran it on hence making
this algorithm quasi-optimal amongst the possible reordering algorithms.

The rest of this paper is organized as follows. We start by presenting related 
work on locality-aware iterative algorithms in Section~\ref{sec:related}. We
then introduce the problematic of reuse distance and cache miss
(Section~\ref{sec:reuse}) and the Laplacian Mesh Smoothing algorithm
(Section~\ref{sec:lms}). Then in Section~\ref{sec:mesh_reorder} we describe our
novel reordering scheme. Finally, we provide and discuss experimental results in
Section~\ref{sec:expe}. Section~\ref{sec:conclusion} provides conclusions and 
direction for future work.

\section{Related work}
	\label{sec:related}
Reordering schemes have proven very efficient to improve performance of many
irregular iterative applications such as Feasible Newton mesh
optimization~\cite{imr,MQ:Strout,feasible}, mesh
warping~\cite{sastry2014parallel} or Laplacian Mesh
Smoothing~\cite{park,imr,MQ:Strout,feasible}. In this context, Munson and
Hovland~\cite{feasible} developed a Feasible Newton mesh optimization algorithm
and benchmark. Their algorithm and benchmark employed both data and iteration
reordering in order to improve cache performance. One finding of their research
was that reordering of the input data can increase or decrease the number of
iterations taken by the inexact Newton method and can affect its success or
failure~\cite{feasible}. They compared six different reorderings of the vertices
and elements in the mesh. They showed that a breadth-first search of the
vertices, followed by a reversing of the order in which the vertices were
visited performed better than the original ordering. When data and iteration
ordering were performed on the relevant hypergraphs, the reorderings were found
to significantly decrease the number of cache misses in all phases of code
execution and resulted in significantly faster code~\cite{MQ:Strout,feasible}.
In this work, we compare our results to their results.

Recently, Shontz and Knupp~\cite{imr} considered mesh vertex reordering
techniques to reduce the total time required to improve the mesh quality. Vertex
ordering was performed for the first iteration (static) and every iterations
(dynamic). Their main finding was that static strategies were superior to
dynamic ones because of the overhead of the additional reorderings. Then they
compared twenty different orderings from the literature and were not able to
find an ordering that stood out as an all-purpose ordering. Park, Knupp and
Shontz~\cite{park} confirmed this result. Based on their result, this work
focuses on an \emph{a priori} ordering. We design a novel ordering that not only
stands out for all meshes considered and outperforms existing heuristics.
Unlike this study, we consider cache performance when a reordering technique is
applied to mesh smoothing.

\section{Problem definitions}
	\label{sec:defs}
In this Section, we introduce the basis for the different notions introduced in
this paper.

\subsection{Reuse distance and cache misses}
	\label{sec:reuse}
Current architectures have different levels of storage systems. They are what
are called \emph{hierarchical architectures}: each level has different access
costs (as a rule of thumb, the closer the storage system, the faster the access)
and different storage sizes (again, often the closest systems have the smallest
size).

When a piece of data is accessed, it is stored to the closest level of storage.
Then when other data are used, it is pushed back to higher level of storage
where the access costs are larger. When a computation tries to access a piece of
data and that data is not stored on the closest storage, we call this a
\emph{miss}. Then the computation has to go and fetch it from a remote storage
which induces additional costs.

In this work we focus on NUMA architectures. In this model, there are four
different storage systems: three caches (L1, L2, L3) and a main memory. They are
organized with the L1 cache being the smallest and fastest storage system and
memory being the largest and slowest storage system. 
\begin{figure}[!h]
\centering
\includegraphics[width=0.7\linewidth]{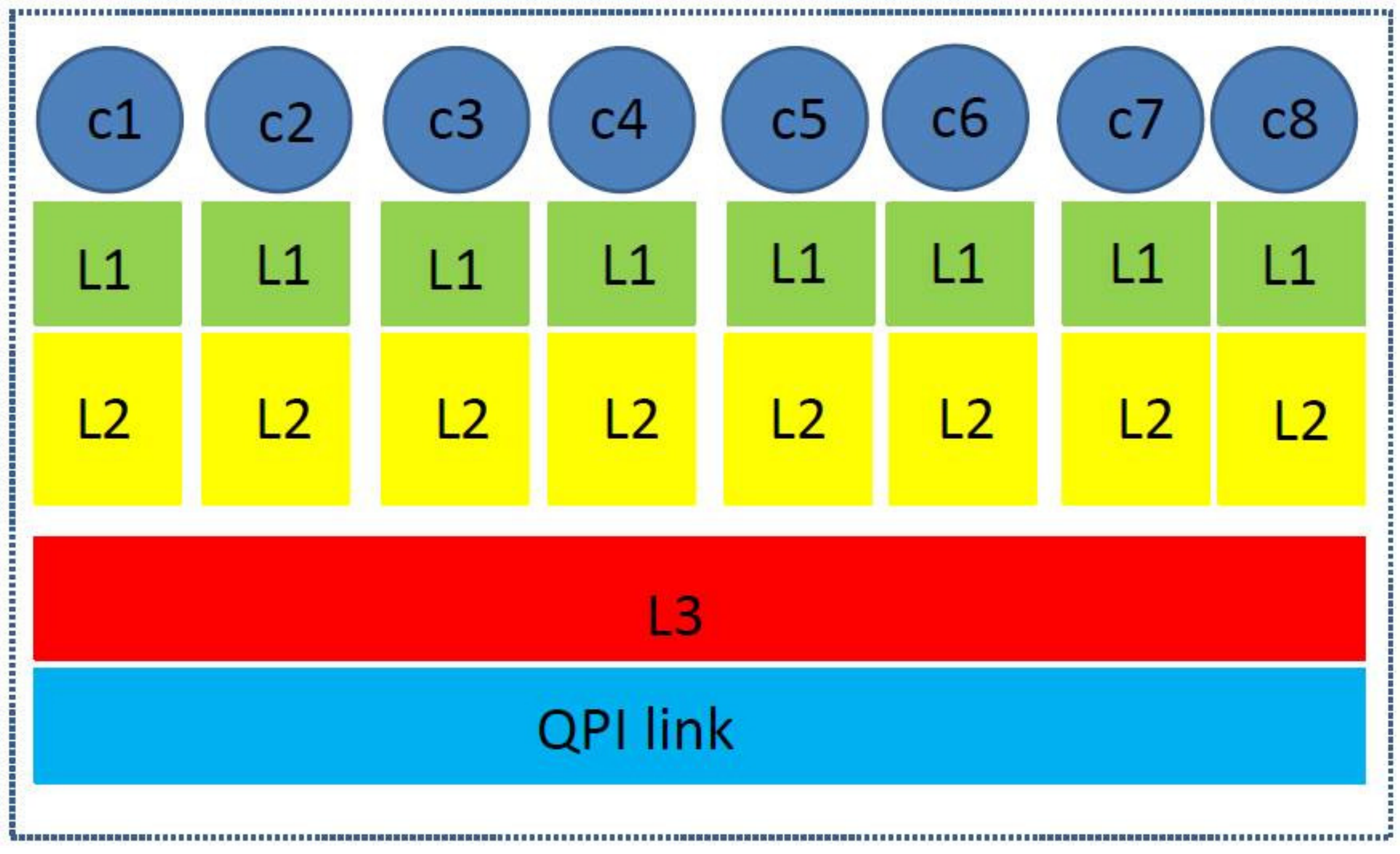}
\caption{High-level view of a socket of the Intel Westmere-EX processor. The
machine has four sockets, which are connected directly via 3.2 GHz QPI links.
Each socket has eight cores $c_1$ to $c_8$ with the inclusive cache hierarchy;
32K L1 private cache, 256K L2 private cache, and 24MB shared L3 cache.
\label{architecture}}
\end{figure}

To choose which unused data is being pushed back to remote storage systems, the
caches often follow the Least Recently Used (LRU) algorithm: the piece of data
that was the least recently used is the one that will be pushed to remote
storage. With this algorithm is introduced the notion of \emph{reuse distance}:
between two accesses to the same data, what are the number of distinct piece of
data that were accessed. 

Intuitively, if the reuse distance is greater than the size of the L1 cache,
there will be a L1-cache miss. If it is greater than the size of L2, there will
also be an additional L2-cache miss, etc.

In practice the behavior is slightly different~\cite{drepper2007every}. For
instance, the fetching is done by cache lines (multiple elements at once) and
not by elements which impacts the actual cache misses. However, in this paper we
use this simpler theoretical model as a first order approximation to understand
and give an intuition of the results observed.

\subsection{Laplacian Mesh smoothing}
	\label{sec:lms}
\begin{algorithm}
\caption{\textit{Algorithm for Laplacian Mesh Smoothing}}\label{algo:mesh_smooth}
\begin{algorithmic}[1]
\Procedure{\LapSmo}{$V$, $T$}
\State $V \gets$ mesh vertex data
\State $T \gets$ mesh triangle data
\State quality $\gets 0$
\For{$i=1$ to $|V|$}
    \State compute $q_{V[i]}$: mesh quality for $V[i]$
    \State increase quality by $q_{V[i]}$
\EndFor
\State $\texttt{Global\_quality} = \frac{1}{|V|}$quality
\While{$\texttt{Global\_quality} <$ goal quality \label{algoline:mesh_quality}}
	\For{$i\in \{$ interior vertices $\}$}
	    \State Smooth $V[i]$ using Equation~\eqref{eq:smooth}
		\State Update $\texttt{Global\_quality}$
	\EndFor
\EndWhile
\EndProcedure
\end{algorithmic}
\end{algorithm}

Mesh smoothing application is performed to improve the quality of the mesh so
that an accurate PDE solution can be obtained within a short execution
time~\cite{fkms2002}. In the mesh smoothing procedure, the algorithm first
computes the initial mesh quality for a given mesh and do mesh smoothing to
improve the mesh quality~\cite{alternating}. 
After smoothing, we compute the mesh quality again and if the overall mesh
quality reached a desirable level, then we stop smoothing. Note that because the
desirable quality might never be attained, there is often a maximum number of
iterations set to the main loop (Algorithm~\ref{algo:mesh_smooth},
line~\ref{algoline:mesh_quality}). Note that exact details of the implementation
used can be found in the Mesquite software package~\cite{mesquite}.

We use edge-length ratio~\cite{MQ:Knupp2001}, i.e., the ratio of minimum and
maximum length edges, as a mesh quality metric for computing the mesh quality in
this study. The mesh quality for each vertex can be represented as an average
quality metric value of triangles that are attached on the vertex. The mesh
quality for the entire region of the mesh can be computed by averaging all mesh
quality values obtained from each vertex.
The range of edge-length ratio mesh quality values is $0 \sim 1$.
If the quality value for a triangle is close to $1$, we can say the triangle has
a good shape, i.e., is close to an equilateral triangle.
The goal of the mesh smoothing algorithm is to maximize the average quality
values for each vertex.

To improve the quality of the mesh, we perform Laplacian smoothing to replace
a vertex position using neighbor vertices coordinates. Suppose there is a
vertex $v$ we want to move and $N$ neighbored vertices surrounding the vertex.
If we represent the position for $i^{th}$ neighbored vertex as $p_{i}$, the new
position for $\overline{p}_{v}$ will be
\begin{equation}
    \overline{p}_{v} = \frac{1}{N}\sum^{N}_{i=1}p_{i}. \label{eq:smooth}
\end{equation}
Figure~\ref{Lap} shows the initial mesh and the output mesh of Laplacian
smoothing.

\begin{figure}[h!]
\centering
\includegraphics[width=0.6\linewidth]{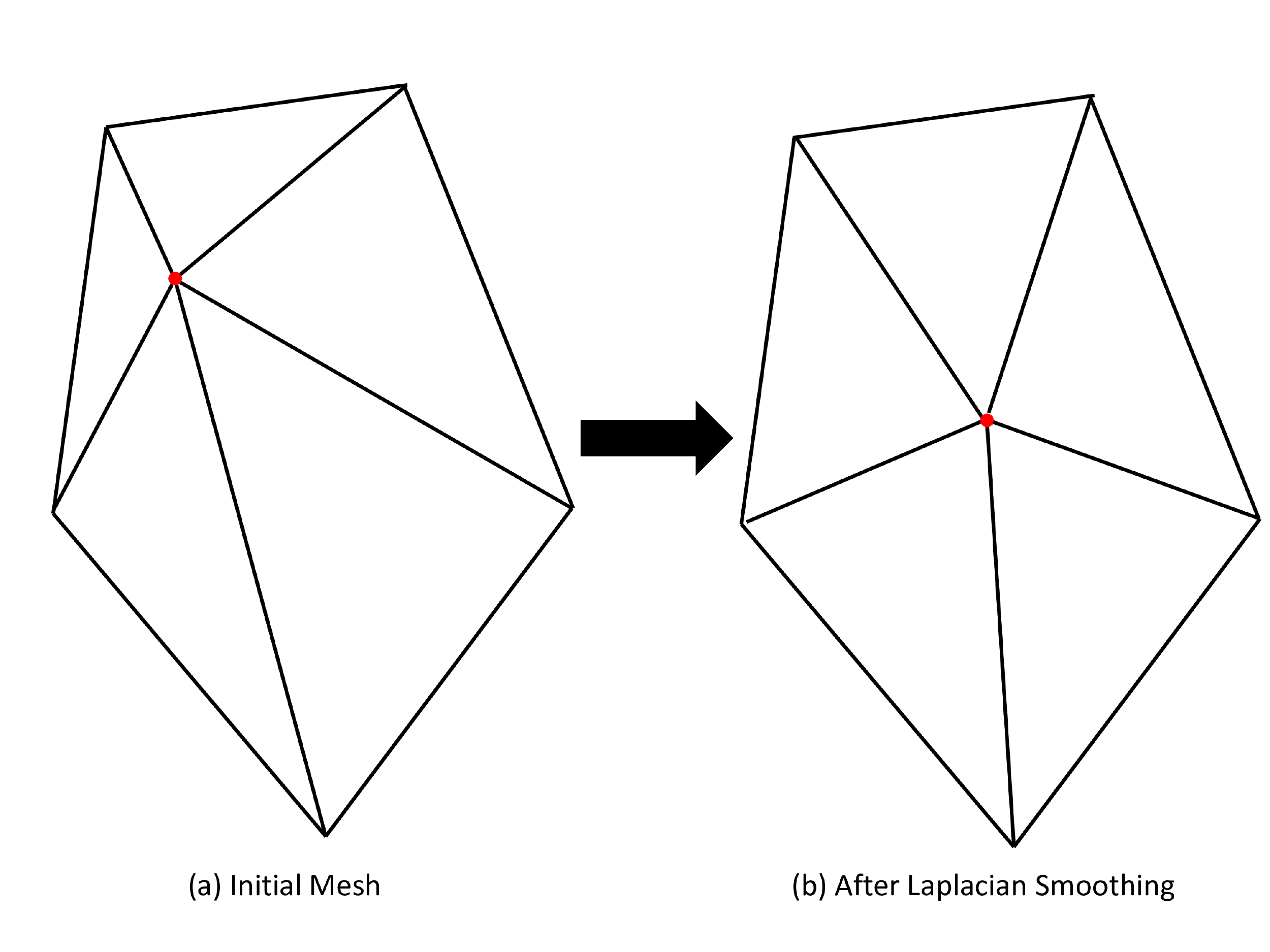}
\caption{Laplacian smoothing performed on initial mesh. The vertex position inside the mesh was changed.}
\label{Lap}
\end{figure}

We would like to improve the performance of the Laplacian mesh smoothing by
reordering the initial mesh. In order to do so, we reorder each vertex based on
the initial mesh quality for each vertex. Details are described in
Section~\ref{sec:mesh_reorder}.

\section{Mesh Reordering to Improve Mesh Smoothing Performance}
	\label{sec:mesh_reorder}

\subsection{Factors Affecting Temporal and Spatial Locality}
There are two factors that affect the performance of Laplacian mesh
smoothing~\cite{MQ:Strout}.

\emph{Spatial locality} is defined as the reuse within a cache line. The nodes
of a mesh are stored in the memory. When a node is selected, the node is
streamed to the cache along with its neighboring nodes (as many as can fit in a
cache line).
\emph{Temporal locality} is defined as the reuse of a node that is already
stored in the cache before it's cache line is discarded.

In both cases, the absence of sufficient data locality causes a cache miss, thus
adversely impacting the execution time.

Since the Laplacian smoothing method processes a node and some of its neighbors
successively, prefetching the neighboring nodes into the caches improves the
application's cache performance. In particular this is designated to improve the
temporal locality: since processing a node requires information from its
neighbors, it seems natural to process its neighbors then while the information
is still cached.
A good reordering strategy then aims at improving the spatial locality.

Intuitively, when one executes a node, one needs to fetch its neighbors, hence
having them close by together should improve spatial locality.
In this context, while naive, \bfs seems to be a good start to improve spatial
locality.

Figure~\ref{trace} shows two partial traces of node visiting observed from
Laplacian mesh smoothing when two different reuse distances are applied. When
the reuse distances for Laplacian mesh smoothing are reduced by applying \bfs
ordering, temporal and spatial localities are significantly improved. This
example dictates the interplay between temporal and spatial localities on one
hand and reducing reuse distances on the other.

\begin{figure}[!h]
  \centering
  \subfloat[Traces for Laplacian mesh smoothing with DFS ordering.]
  {\includegraphics[width=\linewidth]{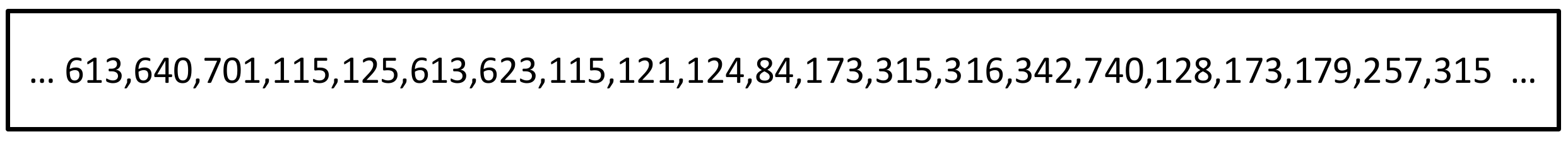}}\\
  \subfloat[Traces for Laplacian mesh smoothing with BFS ordering]
  {\includegraphics[width=\linewidth]{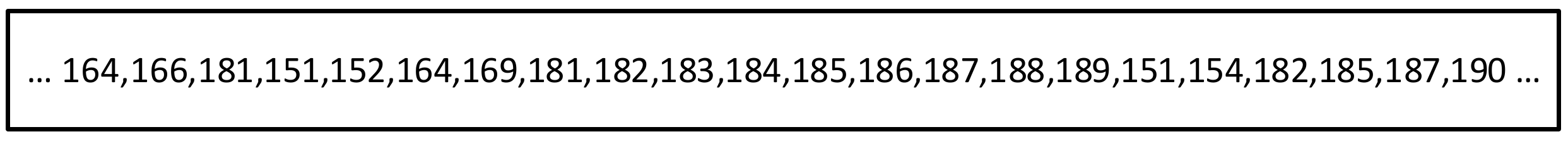}}
  \caption{Partial traces observed from node visiting for Laplacian mesh
  smoothing. The number represent the location of the data accessed in the data
  array. The closer the numbers, the shorter the distance between the accesses.}
  \label{trace}
\end{figure}

\subsection{Toward a Reuse Distance Reducing Ordering}
We now would like to consider how to reduce the reuse distances for Laplacian
mesh smoothing. 

The main idea of doing a reordering of the vertices is to improve locality when
they are accessed by the algorithm.
By the time the Laplacian mesh smoothing terminates, there is a certain order in
which nodes have been considered. 

We give an example of the usefulness of a good ordering in
Figure~\ref{fig:fake}. Consider the synthetic mesh shown in
Figure~\ref{fig:fake}.
Let us consider two orderings: the Depth First Search ordering given in
Figure~\ref{fakeori} and the Breadth First Search ordering given in
Figure~\ref{fakebfs}. Assume that node $j$ (10 in DFS, 3 in BFS) has worse
quality. Then during the execution of the algorithm (Read Data array), it will
be accessed first, followed by its neighbors: $k$, $m$, $i$, $a$, $b$ (to
compute its Laplacian value). In the DFS ordering, the range of accesses in the node array varies between
positions $1$ and $10$, while in the BFS ordering, it varies between positions
$1$ and $7$. Minimizing the span of accesses allows for a better spatial
locality.

\begin{figure}[h!]
  \centering
  \subfloat[DFS ordered Mesh.]
	{\includegraphics[width=0.8\linewidth]{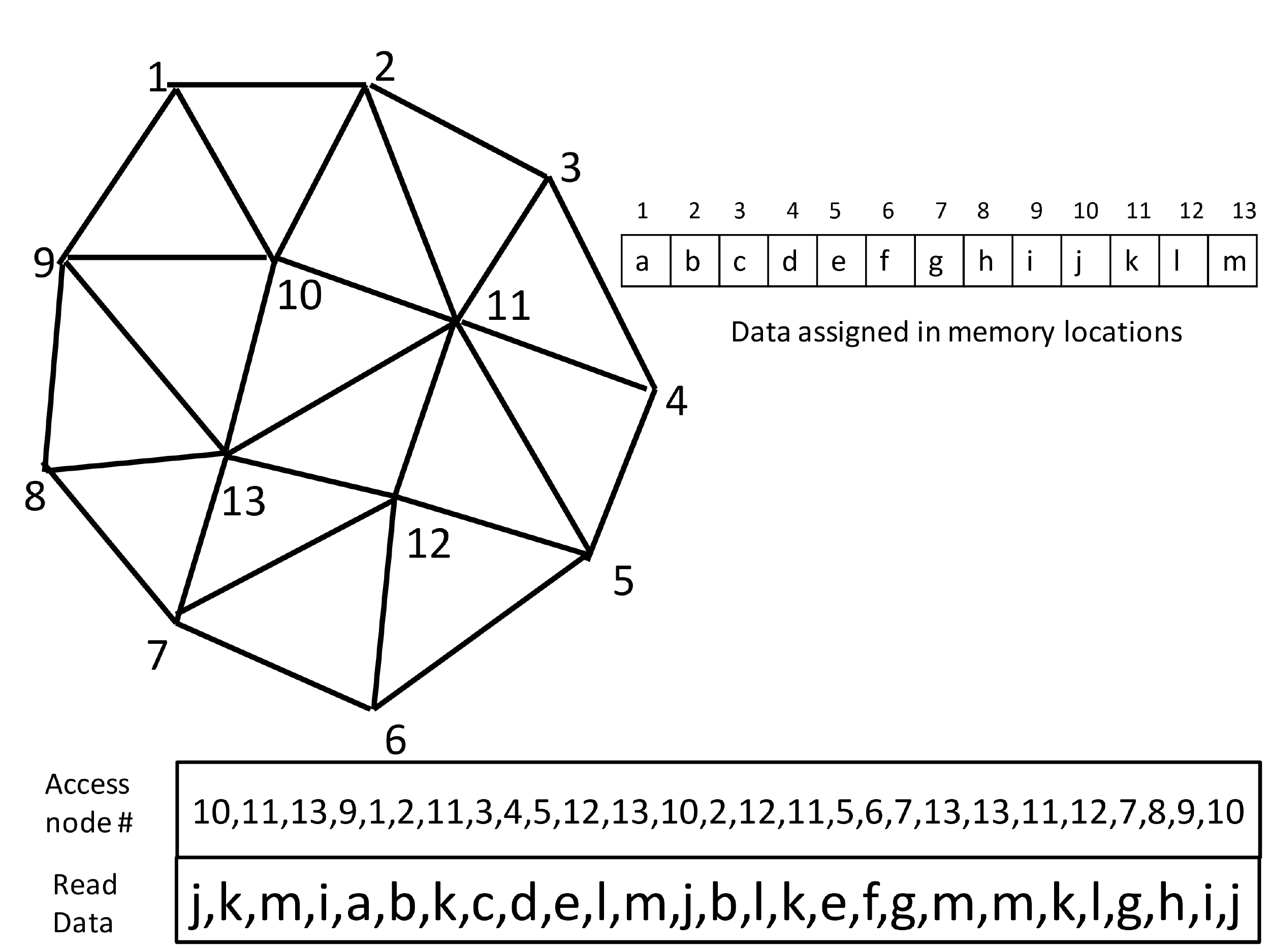}\label{fakeori}}\\
  \subfloat[BFS ordered Mesh.]
	{\includegraphics[width=0.8\linewidth]{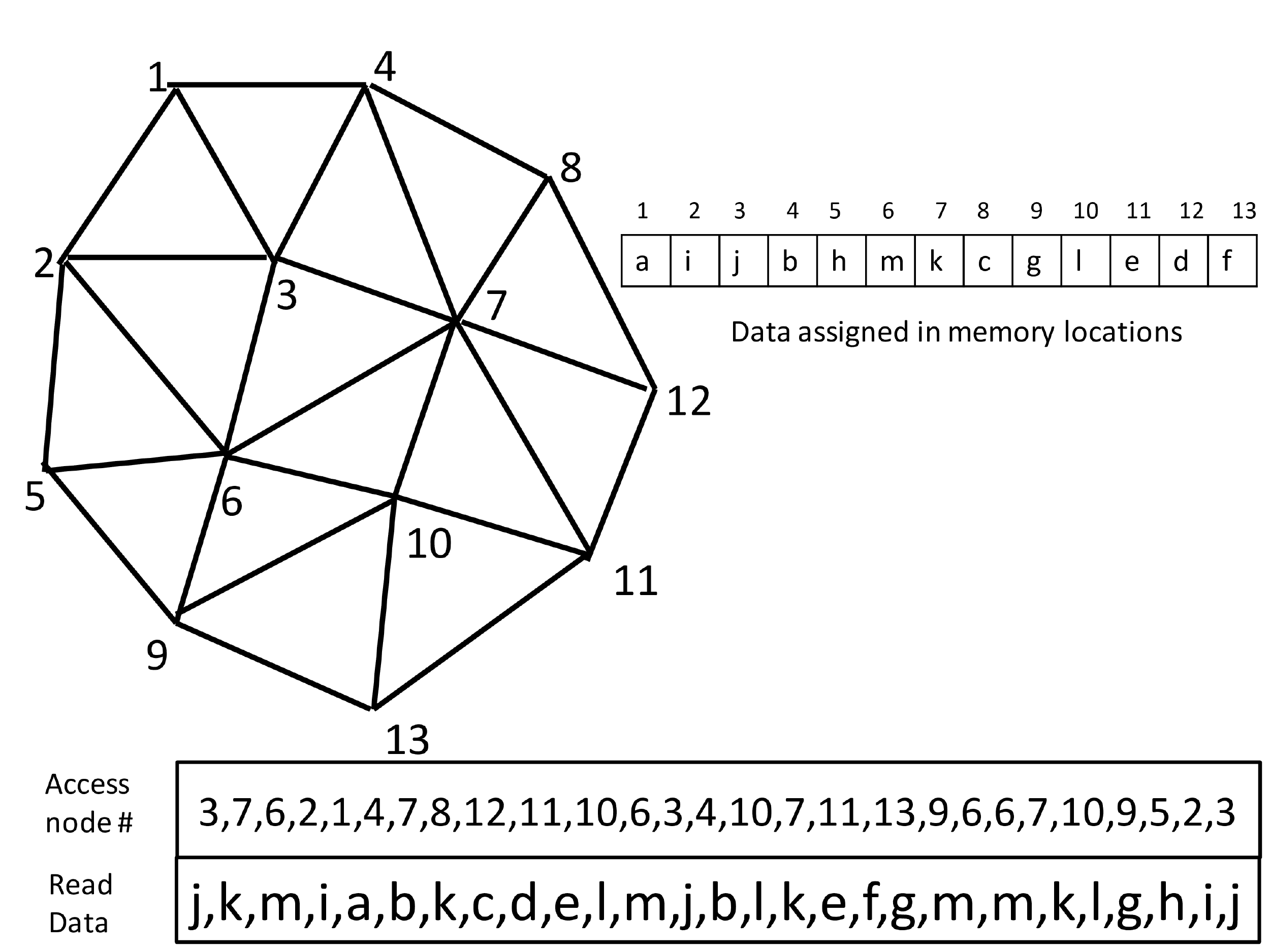}  \label{fakebfs}}\\
   \caption{For both mesh, the data values ($a$ to $m$) of each nodes are stored
in a 13 consecutive memory locations. 
While the ordering changes the location of the different data values in the data
array, the smoothing algorithm is identical (see the sequence of read data).
\label{fig:fake}}
\end{figure}

The nodes in the mesh reordered by BFS ordering store their nodes spatially.
This causes the access patterns for Laplacian mesh smoothing to become similar
to the memory access of nodes streamed in the cache. 
Though the accessed node list becomes similar to the streamed node list, there
are still spaces for improvement for obtaining optimal reuse distance of
Laplacian mesh smoothing.

We now consider improving temporal locality for Laplacian mesh smoothing.

The Laplacian mesh smoothing is a greedy algorithm.
When smoothing the mesh, the LMS algorithm starts by visiting the node that has
the worse quality. Once the smoothing process for the node is over, it selects
another node that has the worst quality among nodes nearby the node, i.e.,
neighboring nodes.

\begin{figure}[!h]
  \centering
\includegraphics[width=0.9\linewidth]{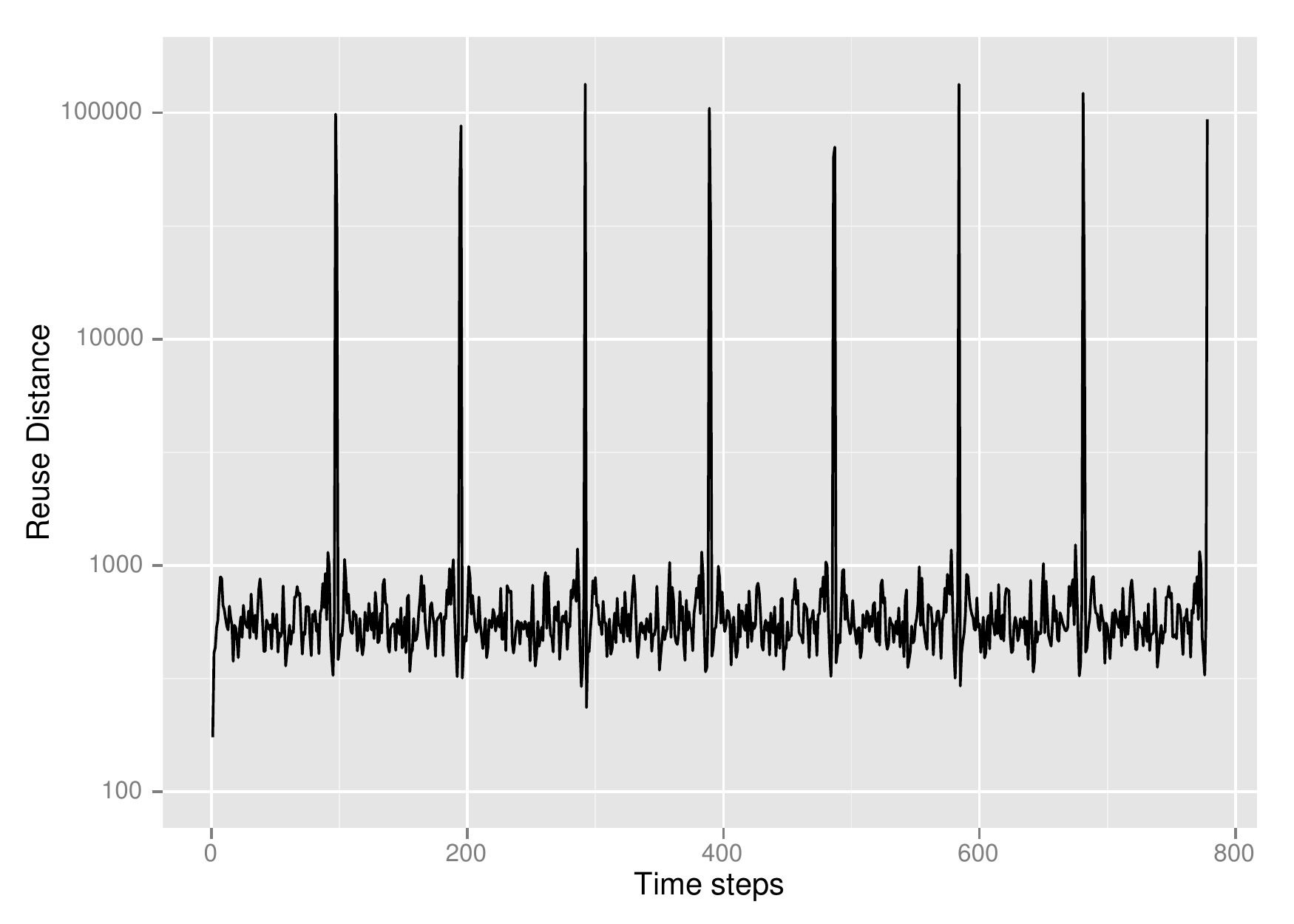}
\caption{Observed reuse distance profiles for a \emph{carabiner} mesh given the
initial ordering. In order to make it more readable, we divided each iteration
into 100 {\em Time Steps} where each time step is the average of approximatively
20,000 consecutive data accesses.\label{change}}
\end{figure}

We studied reuse distance patterns of the different iterations of the mesh
smoothing algorithm. 
As can be seen on Figure~\ref{change} (note that we had very similar results for
other meshes), the reuse distance has similar patterns over the different
iterations (there are eight iterations in the execution plotted on
Figure~\ref{change}). We conjecture that the access patterns
for Laplacian mesh smoothing can be controlled by the initial qualities of each
node in the mesh. 

Hence we conjecture that if we sort the nodes and their neighboring nodes based
on the qualities they have, the temporal locality will be improved. We base our
reordering heuristic on this conjecture.

Under this conjecture, we propose a mesh reordering scheme called \rdr in order
to reduce the reuse distance of Laplacian mesh smoothing.
Our reordering scheme (Algorithm~\ref{algo:distance_reorder}) follows a similar
pattern to the mesh smoothing algorithm iterations. Starting from the node with
the worse quality,
\begin{enumerate}
	\item From a given node already ordered, we sort all its neighbors that have
	not been ordered yet by increasing quality.
	\item We append them as such to the list of already ordered neighbors. 
	\item Then we perform the same algorithm for its neighbor of worse quality
(that has not been processed yet).
\end{enumerate}

\begin{algorithm}
\small
\caption{\textit{Algorithm for Reuse Distance Reducing Ordering}}\label{algo:distance_reorder}
\begin{algorithmic}[1]
\Procedure{\rdr}{$V$, $T$}
	\State $V_\texttt{new} \gets $ empty array of size $|V|$
	\State $\texttt{processed} \gets $ bool array of size $|V|$ initialized with false
	\State $\texttt{sorted} \gets $ bool array of size $|V|$ initialized with false
	\State \texttt{next\_num} $\gets 1$

	\For{$i\in \{$interior vertices sorted by increasing quality$\}$}
		\If{not $\texttt{processed}[i]$} \label{line:processed}
		    \If{not $\texttt{sorted}[i]$} 
		    	\State $V_\texttt{new}[\texttt{next\_num}] \gets V[i]$; incr \texttt{next\_num} 
		    	\State $\texttt{sorted}[i] \gets $ True \label{line:sort1}
		    \EndIf
		    \State $\texttt{processed}[i] \gets$ True \label{line:proc1}
			\State $l\gets\{$unprocessed neighbors of $i$ sorted by increasing quality$\}$

			\While{$l\neq \emptyset$}
				\For{$j=1$ to $|l|$}
					\If{not $\texttt{sorted}[l[j]]$}
					    \State $V_\texttt{new}[\texttt{next\_num}] \gets V[l[j]]$
					    \State incr \texttt{next\_num}
					    \State $\texttt{sorted}[l[j]]\gets$ True \label{line:sort2}
					\EndIf
			    \EndFor
			    \State $\texttt{processed}[l[0]] \gets$ True \label{line:proc2}
			    \State $l\gets\{$unprocessed neighbors of $l[0]$ sorted by increasing quality$\}$
			\EndWhile

		\EndIf
	\EndFor

	\State \Return $V_\texttt{new}$
\EndProcedure
\end{algorithmic}
\end{algorithm}

\begin{theorem}
Given a mesh $(V,T)$, then Algorithm~\ref{algo:distance_reorder} orders all
elements of the mesh exactly once.
\end{theorem}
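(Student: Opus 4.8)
The statement is exactly that the output array $V_\texttt{new}$ is a permutation of $V$, so the plan is to prove two things — (a) no vertex of $V$ is written into $V_\texttt{new}$ more than once, and (b) every vertex of $V$ is written at least once — and then observe that since $V_\texttt{new}$ has length $|V|$ and \texttt{next\_num} is incremented by exactly one at each write (and starts at $1$), (a) and (b) force the $|V|$ writes to land in the $|V|$ distinct cells $V_\texttt{new}[1],\dots,V_\texttt{new}[|V|]$, each holding a distinct vertex.

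For (a), note that $V[v]$ is written to $V_\texttt{new}$ only at the two lines that increment \texttt{next\_num}, that each such write is guarded by the test \texttt{not sorted}$[v]$ and immediately followed by $\texttt{sorted}[v]\gets\texttt{True}$ (lines~\ref{line:sort1} and~\ref{line:sort2}), and that no line ever clears a \texttt{sorted} flag. So I would maintain the loop invariant ``$\texttt{sorted}[v]=\texttt{True}$ iff $V[v]$ has already been written to $V_\texttt{new}$, in which case it has been written exactly once'', from which (a) is immediate. While setting this up I would also record the two facts needed below: the algorithm halts, and $\texttt{processed}[v]\Rightarrow\texttt{sorted}[v]$ throughout. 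Halting holds because $l$ always lists currently unprocessed vertices, each pass of the \texttt{while} loop permanently marks exactly one new vertex ($l[0]$) \texttt{processed} (line~\ref{line:proc2}), so the \texttt{while} loop runs at most $|V|$ times in total and the outer \texttt{for} loop ranges over a finite set. The implication $\texttt{processed}[v]\Rightarrow\texttt{sorted}[v]$ holds because each line that marks $v$ \texttt{processed} (lines~\ref{line:proc1} and~\ref{line:proc2}) is preceded in the same pass by code that first makes $v$ \texttt{sorted} (line~\ref{line:sort1}; respectively the inner \texttt{for} loop, which marks every element of $l$ — in particular $l[0]$ — \texttt{sorted}).

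For (b), I would chain two observations. (i) Every interior vertex $i$ is eventually \texttt{sorted}: when the outer loop reaches $i$, either $i$ is already \texttt{processed}, hence \texttt{sorted} by the invariant, or $i$ enters the branch at line~\ref{line:processed} and is marked \texttt{sorted} at line~\ref{line:sort1}. (ii) Every neighbor of a \texttt{processed} vertex $v$ is eventually \texttt{sorted}: right after $v$ is marked \texttt{processed}, the algorithm forms the list $l$ of $v$'s currently unprocessed neighbors and marks every unsorted element of it \texttt{sorted}, while any neighbor of $v$ that is already \texttt{processed} is \texttt{sorted} by the invariant; since by the outer loop every interior vertex eventually becomes \texttt{processed}, this shows every vertex adjacent to an interior vertex is \texttt{sorted}. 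Combining (i) and (ii), every vertex that is interior or adjacent to an interior vertex is written to $V_\texttt{new}$. I expect the genuine difficulty to be the last step — arguing that this set is all of $V$ — which requires a hypothesis on the mesh: at minimum that it is connected and has an interior vertex, and more precisely that the traversal launched from the interior vertices reaches every vertex (a boundary vertex all of whose neighbors are boundary vertices becomes \texttt{sorted} only if one of those neighbors is itself eventually marked \texttt{processed}, i.e. appears as some $l[0]$). I would make this assumption explicit — it holds for the well-shaped meshes considered here, where every vertex is adjacent to an interior vertex — and then (b), and hence the theorem, follows.
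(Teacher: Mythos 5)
Your proposal follows essentially the same route as the paper: uniqueness via the \texttt{sorted} flags (each write is guarded by \texttt{not sorted} and immediately followed by setting the flag, which is never cleared), and coverage via the two facts that every interior vertex ends up \texttt{processed} and that \texttt{processed} implies \texttt{sorted}. Where you go beyond the paper is in two places, both to your credit. First, you prove termination of the \texttt{while} loop (each pass permanently marks a new vertex \texttt{processed}), which the paper takes for granted. Second, and more substantively, you notice that the paper's argument as stated only establishes that \emph{interior} vertices are ordered, whereas the theorem claims all elements of the mesh are; a boundary vertex none of whose neighbors is interior is reached only if one of its (boundary) neighbors happens to become some $l[0]$, and this requires a connectivity/adjacency hypothesis on the mesh that the paper leaves implicit. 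Making that hypothesis explicit, as you do, is the right fix; for the triangulated meshes used here every boundary vertex is adjacent to an interior one, so the theorem holds, but the paper's own proof does not actually say so. In short: correct, same core argument, and you have identified and patched a genuine gap in the published proof.
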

\begin{proof}
The fact that each element of the mesh is ordered at most once is guaranteed by
the array \texttt{sorted}: once element $V[i]$ is added to $V_\texttt{new}$,
then the value $\texttt{sorted}[i]$ is set to True and never set back
to False. Furthermore, it can only be added to $V_\texttt{new}$ if 
$\texttt{sorted}[i]=\text{False}$.

The fact that each element is ordered at least once is guaranteed by the
property that (i) at the end of the algorithm, $\forall i\in \{$interior
vertices$\}$, $\texttt{processed}[i]=\text{True}$ (line~\ref{line:processed}
guarantees it), and (ii) $\texttt{processed}[i] \implies \texttt{sorted}[i]$
(every time $\texttt{processed}[i]$ is set to True, $\texttt{sorted}[i]$ was
also set to True earlier (line \ref{line:proc1}$\rightarrow$line \ref{line:sort1},
line \ref{line:proc2}$\rightarrow$line \ref{line:sort2})).
\end{proof}

When mesh smoothing application is executed, the application calculates the
initial qualities of each vertex in a mesh, smooths the mesh vertices, and then
computes the quality of the mesh to see the difference between initial and final
quality of the mesh.
We find that a vertex which has a bad quality in the beginning requires more
time to be smoothed compared to other vertices. If such vertices are processed
earlier than other vertices, the neighbors of the vertex which has the worst
quality are already in the cache and thus, we can improve the spatial locality.
The neighboring vertices are also reordered in increasing order.

By reordering all the vertices based on the methodology we described above, we
are able to obtain a node trace which is very similar to the initial streamed
node list for Laplacian mesh smoothing. Hence, we make sure that both temporal
and spatial localities are improved.

\section{Experimental Results}
	\label{sec:expe}
In this section, we evaluate our Reuse Distance Reducing ordering (\rdr) for
improving the performance of Laplacian mesh smoothing. We first describe the
experimental setup used in this study. We then show the execution times for
Laplacian mesh smoothing reduced by our reuse distance reducing ordering. We
also test the scalability of the Laplacian mesh smoothing with our reuse
distance reducing ordering.

\subsection{Experimental Setup}
	\label{sec:setup}
\noindent\textbf{System Setup. }
We used an Intel Westmere-EX architecture system to evaluate our reuse distance
reducing ordering. The Intel Westmere-EX architecture is equipped with 4
eight-core Intel Xeon E7-8837 processors. It supports 32 concurrent threads.
Each core has 32K L1 private cache and 256K L2 private cache with reported
access latencies of 4 and 10 cycles respectively~\cite{molka2009memory}, and
they share 24MB L3 cache. The L3 cache serves as the central unit for on-chip
inter-core accesses and accesses to off-chip processors include latencies of
data transfer on the QPI link. Consequently, L3 data access latencies can vary
from 38 to 170 cycles depending on core location and cache-line
state~\cite{molka2009memory}. The machine is an inclusive cache hierarchy. Each
processor is directly connected to other three processors via 3.2 GHz QPI links.
Additionally, access to the memory can range from 175 to 290
cycles~\cite{molka2009memory}. Figure~\ref{architecture} shows the high-level
view of the Intel Westmere-EX processor.

For multi-thread running, OpenMP library is used for both systems. Thread
affinity is set via KMP\_AFFINITY=compact, granularity=fine for pinning each
thread to each core. In all cases, thread scheduling is set to be static for
simply collecting the application trace of each thread by evenly dividing the
vertices. We implemented parallel Laplacian mesh smoothing based on the module
in Mesquite~\cite{mesquite}. For the purpose of this evaluation we put a quality 
convergence criterion to 0.000005 (meaning if the quality has improved by less
than this criterion, the execution stops). {\bf Note that the orderings did not
change the number of iterations needed to reach this criterion}

\noindent\textbf{Test Suite. }
To determine the impact of reuse distance reducing ordering on the mesh smoothing process, we tested nine meshes shown in Figure~\ref{meshes} (Coarse approximations are shown).
The meshes were generated by Triangle~\cite{shewchuk96b} and Table~\ref{meshinfo} gives their configurations.

\begin{figure}[!h]
  \centering
  \subfloat[carabiner]
  {\includegraphics[width=0.45\linewidth]{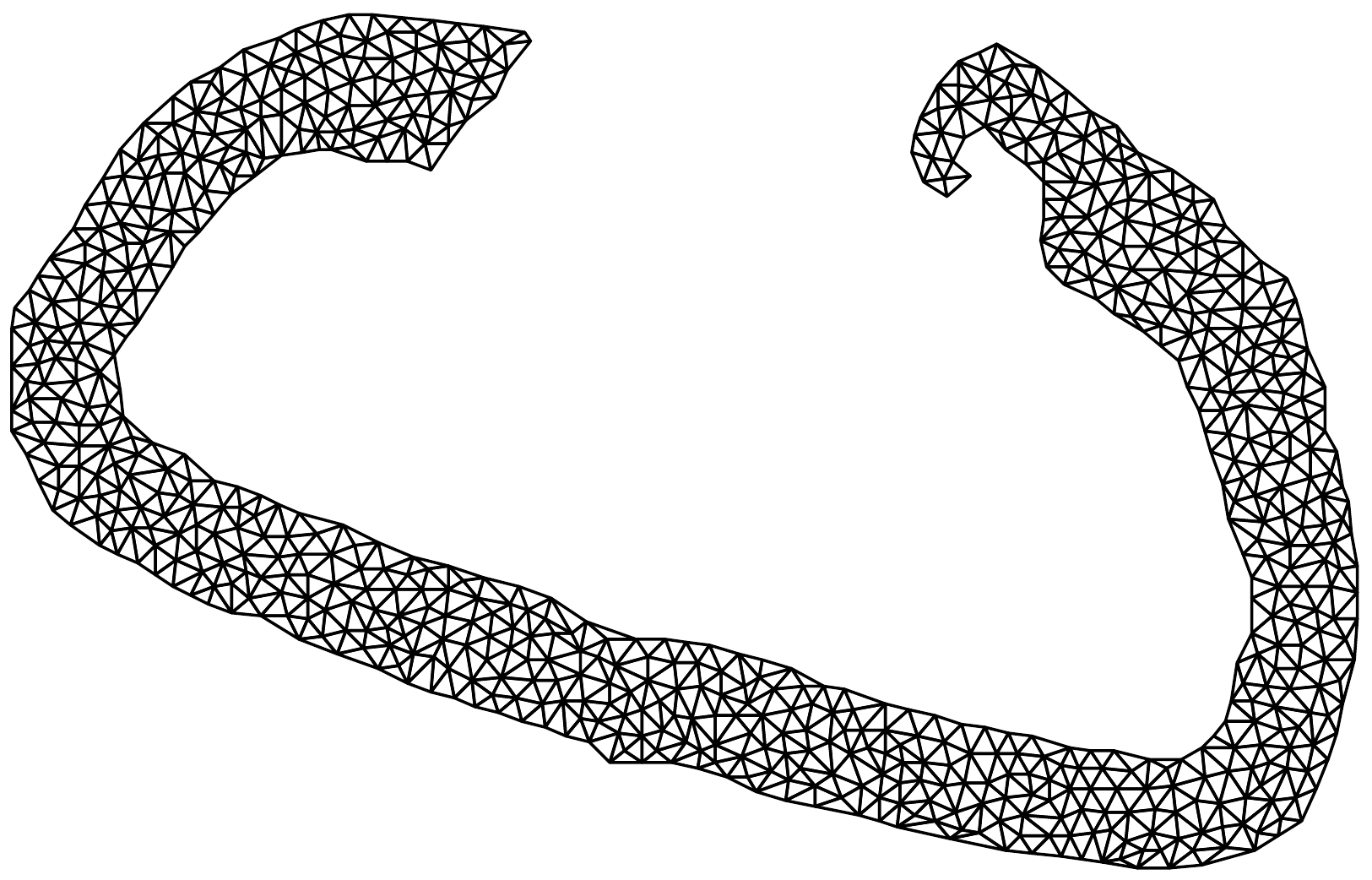}}~
\subfloat[crake]
  {\includegraphics[width=0.45\linewidth]{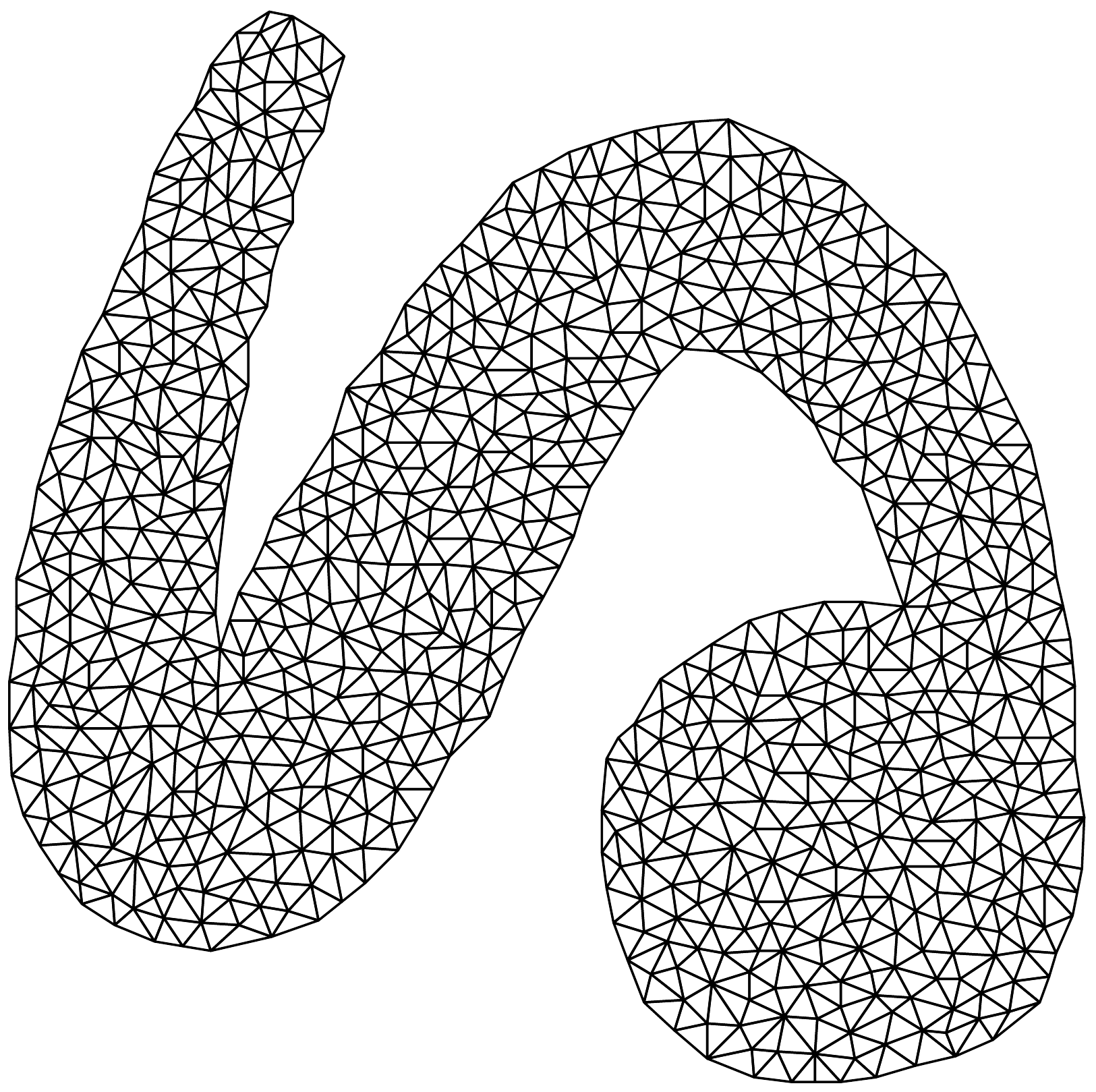}}\\
  \subfloat[dialog]

  {\includegraphics[width=0.45\linewidth]{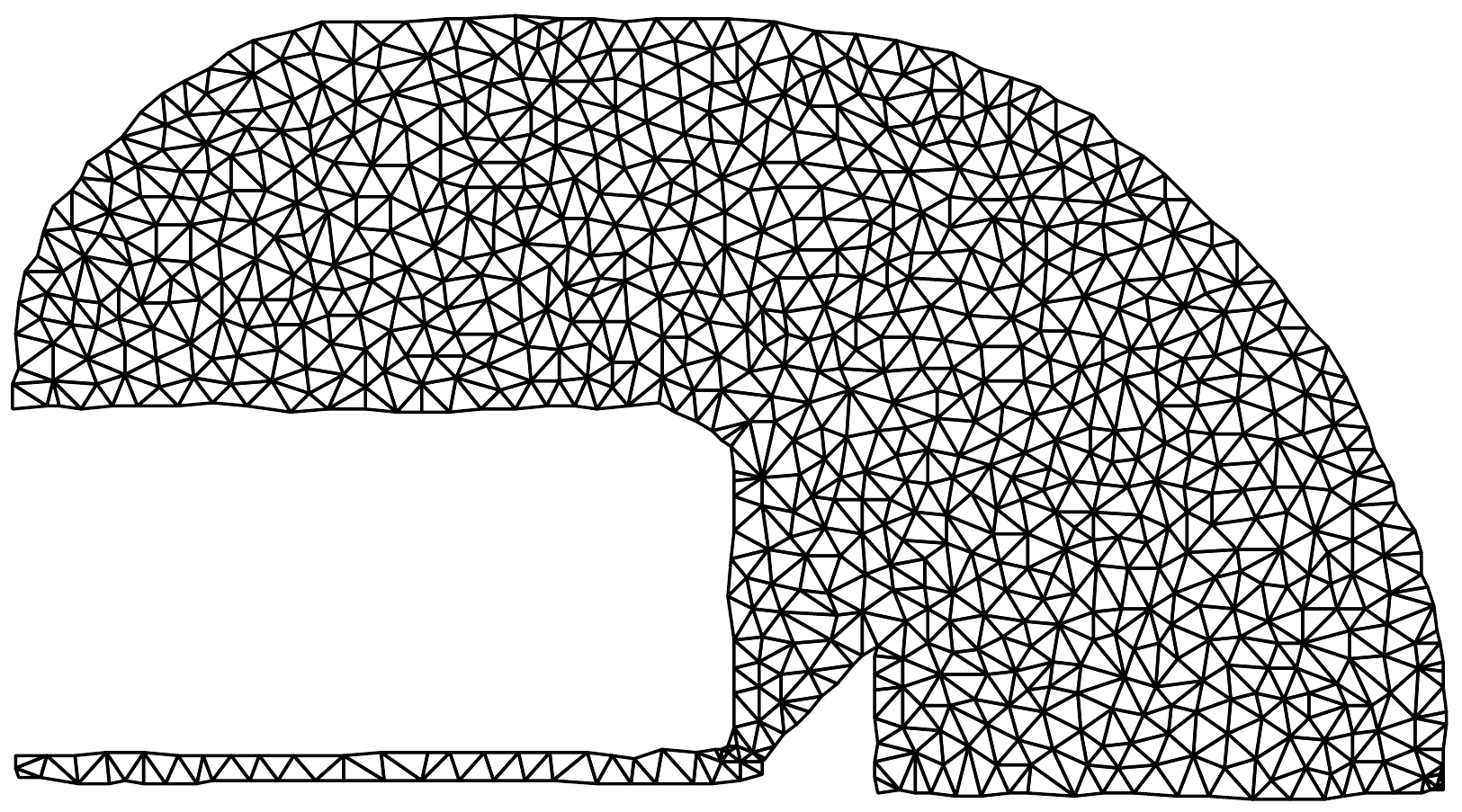}}~
  \subfloat[lake]
  {\includegraphics[width=0.45\linewidth]{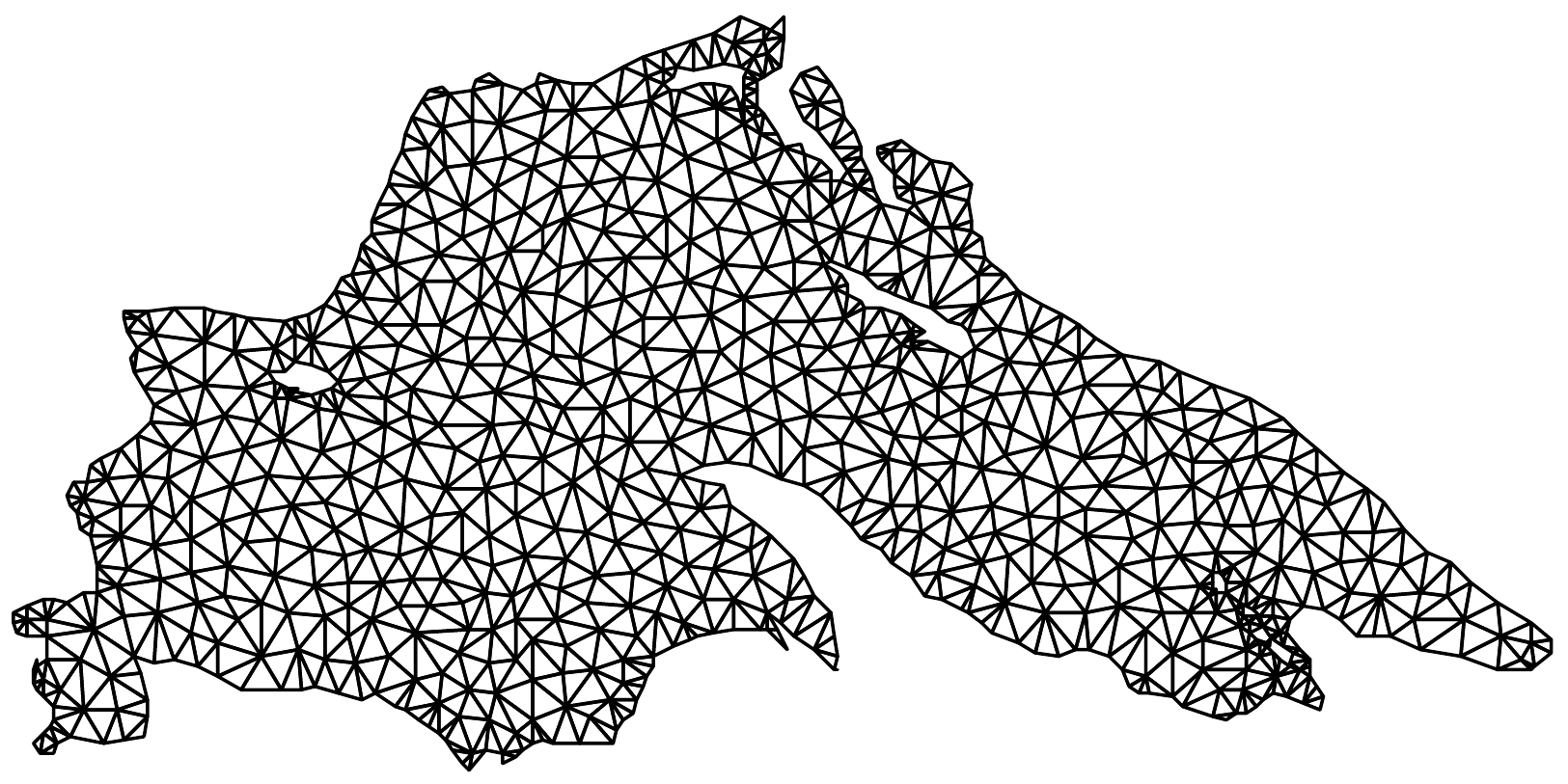}}\\
  \subfloat[riverflow]
  {\includegraphics[width=0.45\linewidth]{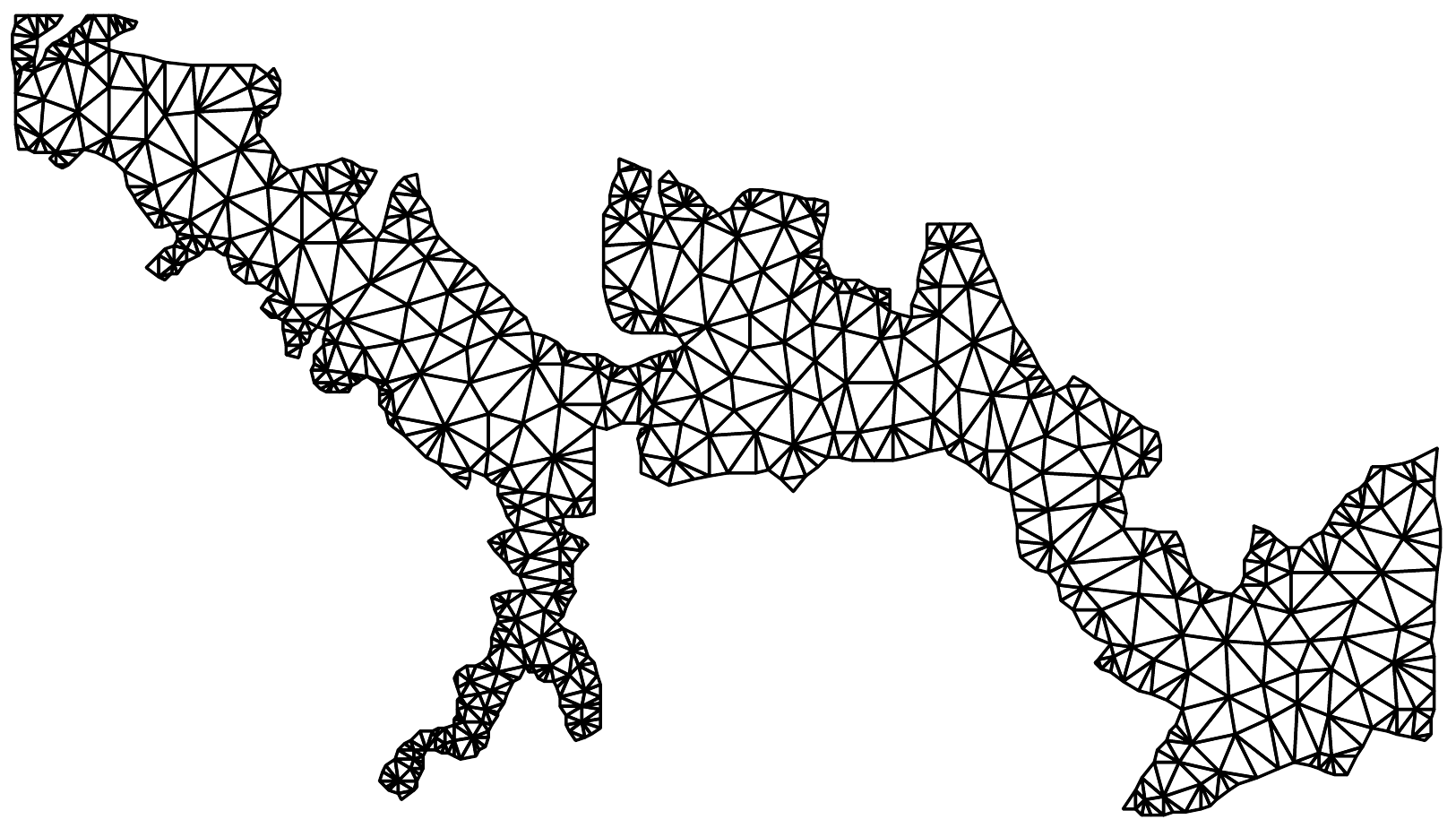}}~
  \subfloat[ocean]
  {\includegraphics[width=0.45\linewidth]{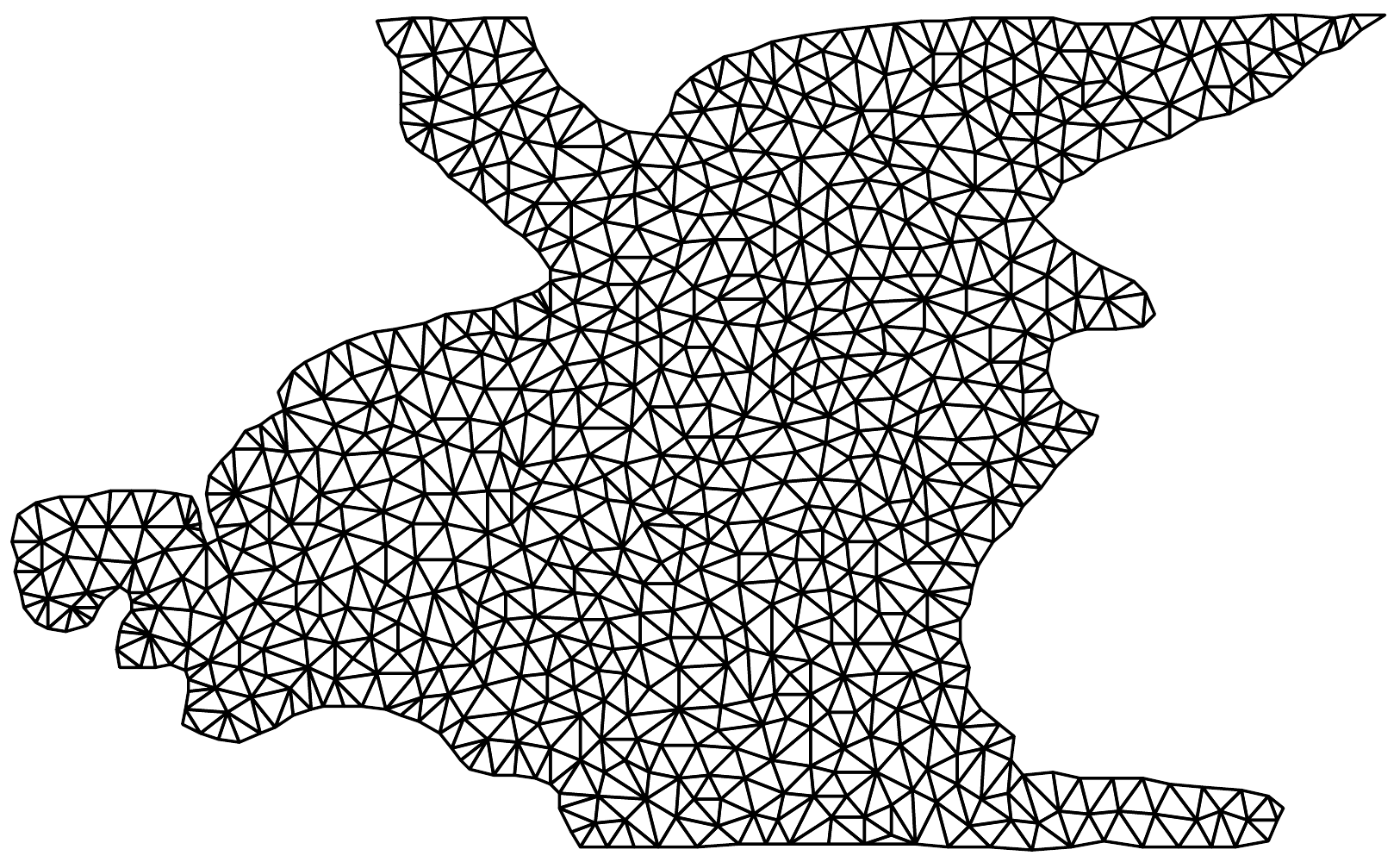}}\\
   \subfloat[stress]
  {\includegraphics[width=0.4\linewidth,angle=90]{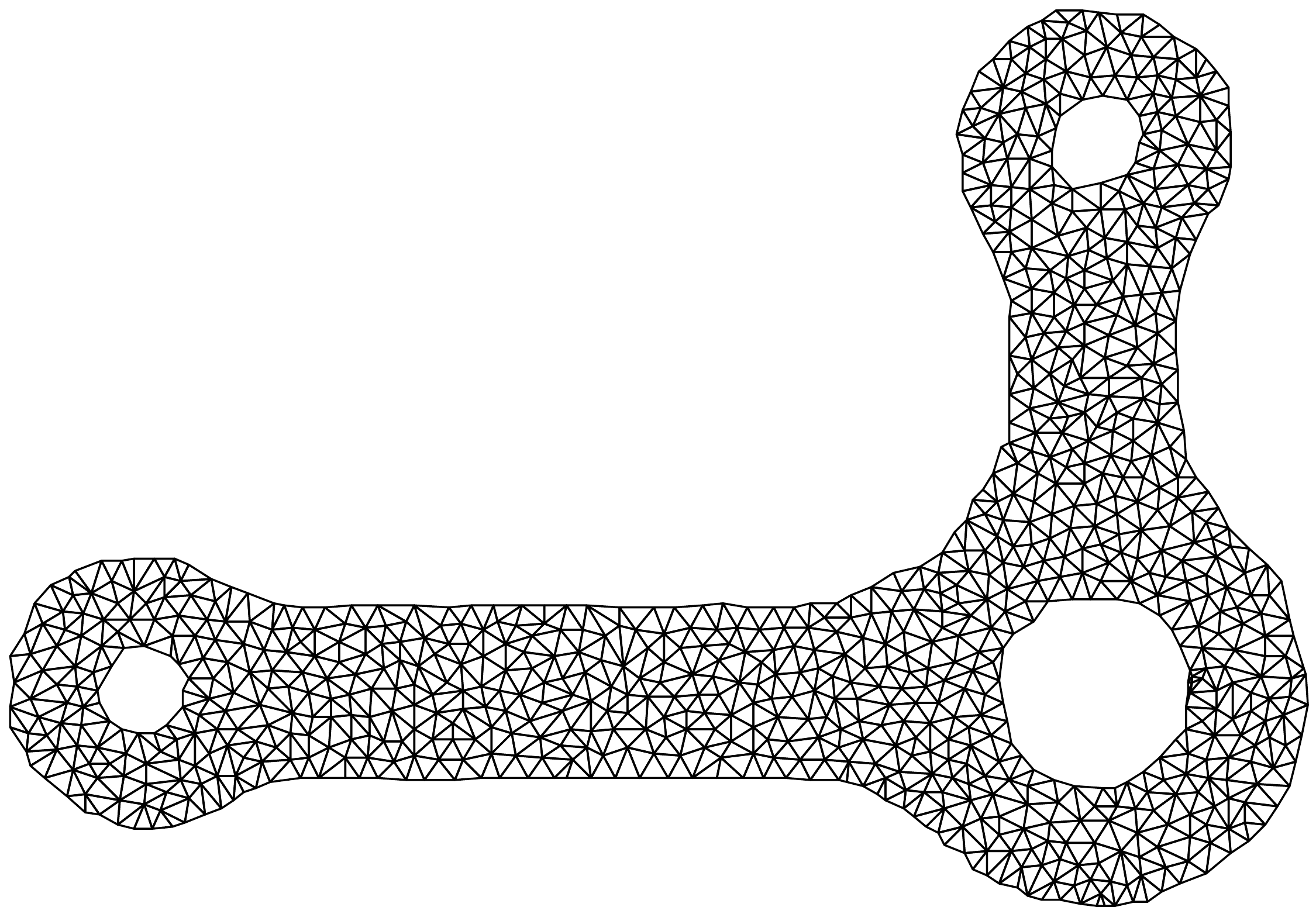}}~
   \subfloat[valve]
  {\includegraphics[width=0.45\linewidth]{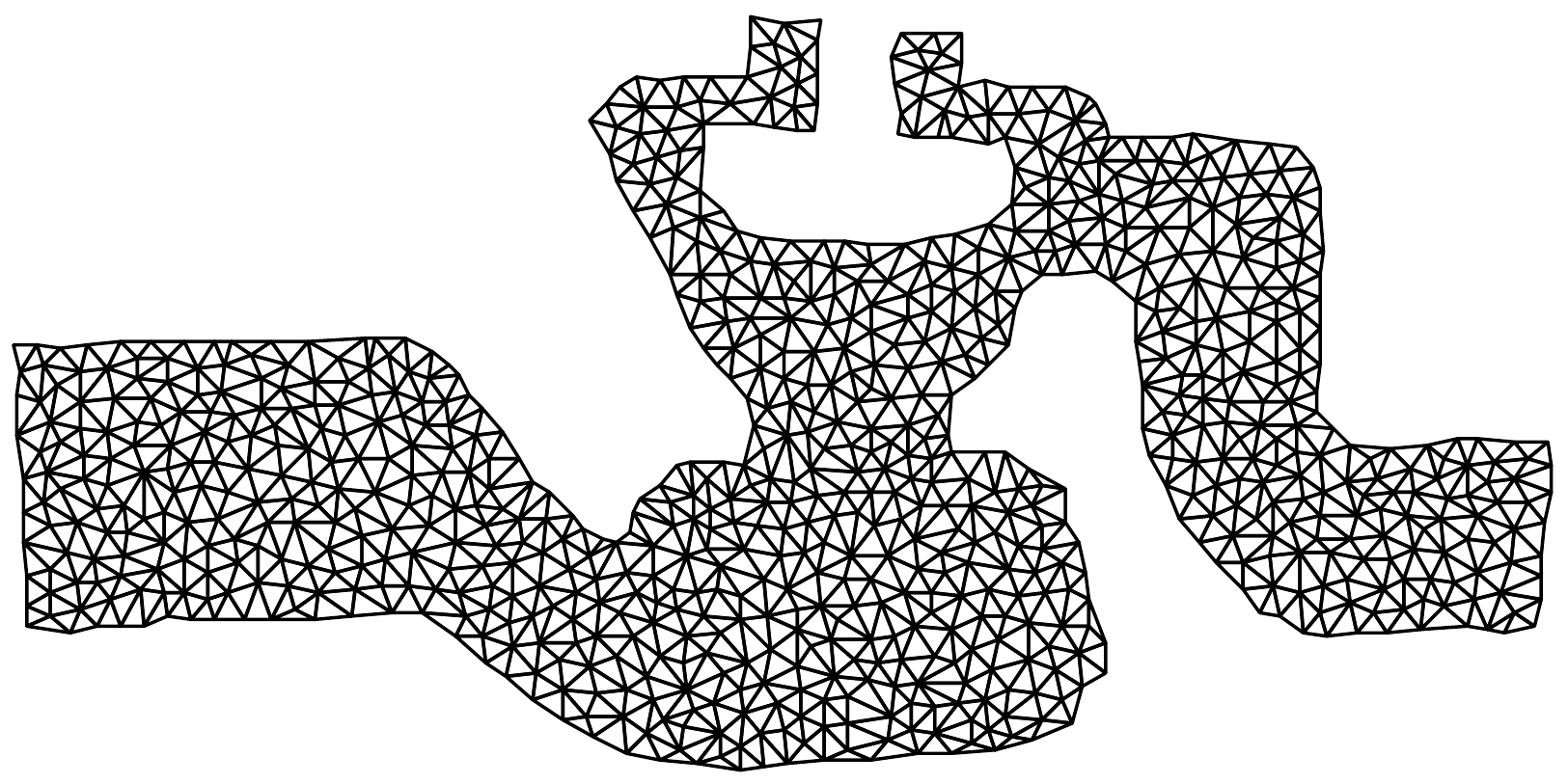}}\\
   \subfloat[wrench]
  {\includegraphics[width=0.45\linewidth]{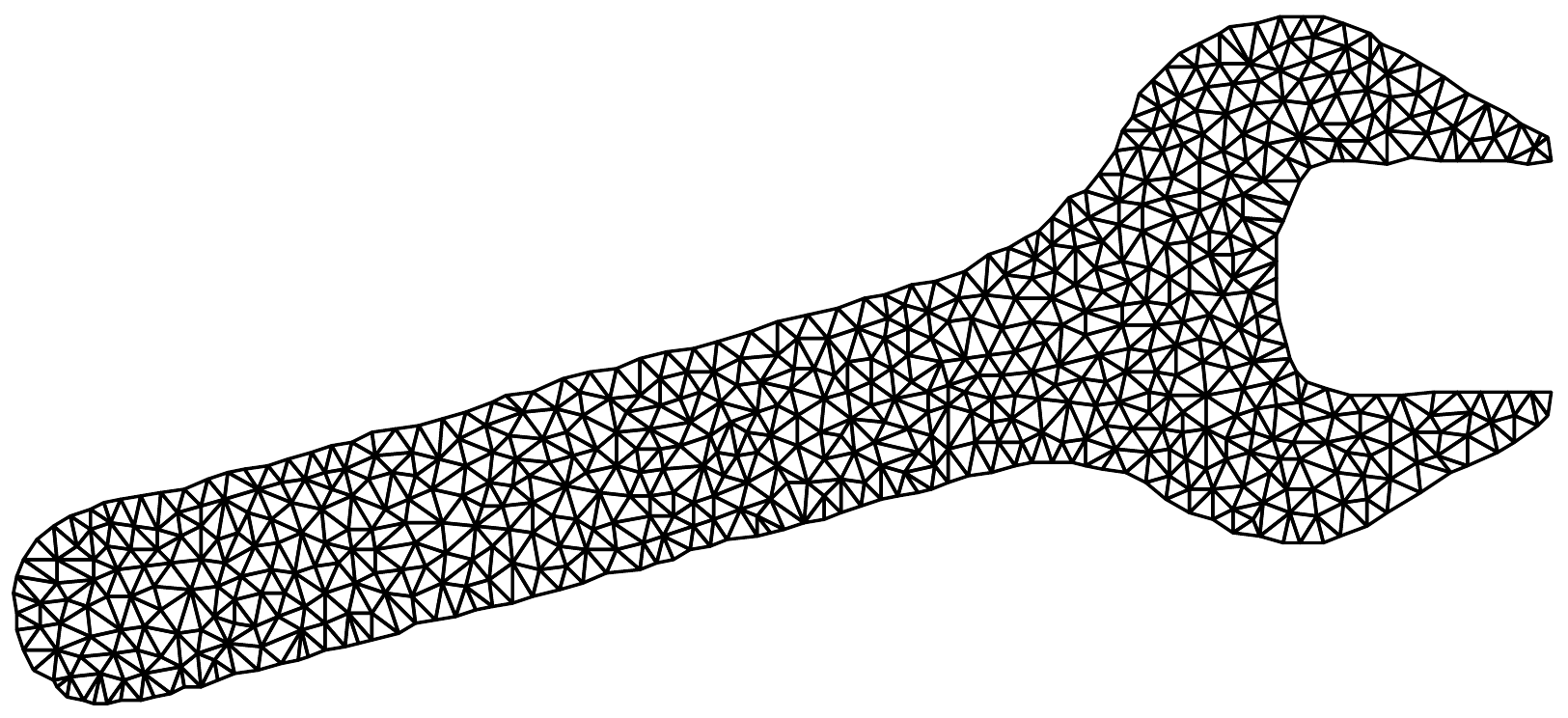}}
  \caption{2D meshes generated by Triangle~\cite{shewchuk96b} and used in the experiments (coarser but representative versions).
  \label{meshes}}
\end{figure}

\begin{table}
\small
\centering
\begin{tabular}{cccc}
\hline
     Label &       Mesh &     vertex &   triangle \\
\hline
        M1 &  carabiner &     328082 &     652920 \\
        M2 &      crake &     298898 &     595638 \\
        M3 &     dialog &     306824 &     611620 \\
        M4 &       lake &     375288 &     747676 \\
        M5 &  riverflow &     332699 &     661615 \\
        M6 &      ocean &     392674 &     783040 \\
        M7 &     stress &     312763 &     622868 \\
        M8 &      valve &     300985 &     599368 \\
        M9 &     wrench &     386757 &     771097 \\
\hline
\end{tabular}
\caption{Input Mesh Configuration}\label{meshinfo}
  \vspace{-0.5cm}
\end{table}

\subsection{Performance: Execution Time and Reduced Reuse Distance}

\subsubsection{Serial execution}
We first test our reuse distance reducing ordering on a serial run of Laplacian
mesh smoothing. Figure~\ref{exetime} shows the execution time results of
Laplacian mesh smoothing when \rdr was applied. For evaluation purpose, we 
provide the original (\ori) reordering (as computed by Triangle) and Breath
First Search (\bfs) reordering (developed by Strout et al.~\cite{MQ:Strout})
together. 
On the nine meshes, our algorithm got a 1.39 average speedup compared to \ori
and 1.19 speedup compared to \bfs.

\begin{figure}[!h]
\centering
\includegraphics[width=0.7\linewidth]{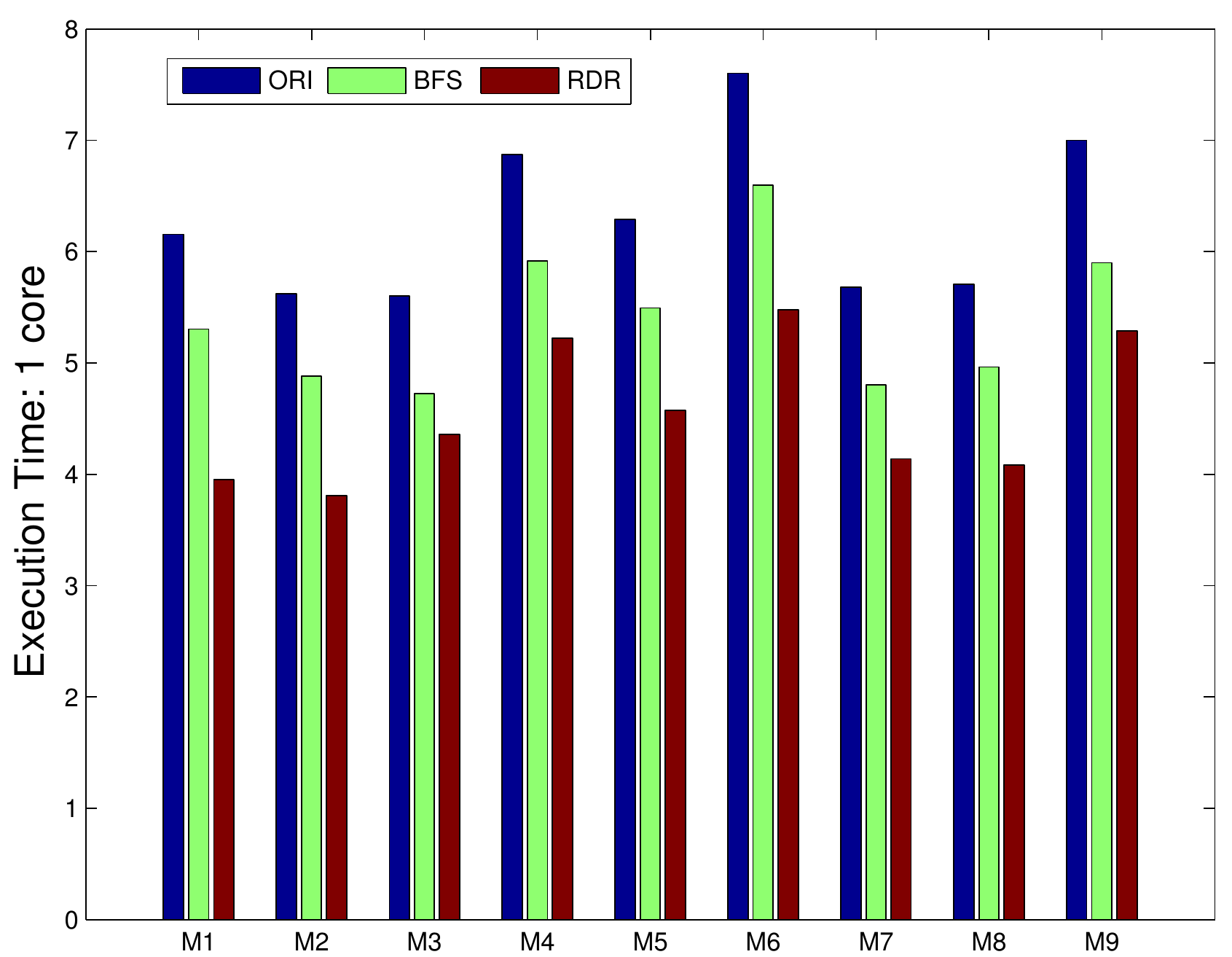}
\caption{Execution time (seconds) results for Laplacian mesh smoothing when
reuse distance reducing ordering was applied. \rdr ordering is 1.39 times faster
than \ori ordering.
\label{exetime}}
\end{figure}

\subsubsection{Cache Performance}
We collected the cache performance counter using PAPI 5.1.0.2~\cite{papi} to
compute cache performance.
Figures~\ref{fig:miss.l1}, \ref{fig:miss.l2} and~\ref{fig:miss.l3} show the L1,
L2, and L3 cache performance for Laplacian mesh smoothing running on a single
core for the different orderings. 
After the \rdr reordering was applied to Laplacian mesh smoothing, cache miss
rates significantly decreased. On average on one core, there were 25\% (resp.
6.3\%) fewer L1 cache misses, 71\% (resp. 51\%) fewer L2 cache misses and 84\%
(resp. 65\%) fewer L3 cache misses compared to \ori (resp. \bfs).

\begin{figure}[h!]
  \centering
  \subfloat[L1]
  {\includegraphics[width=0.7\linewidth]{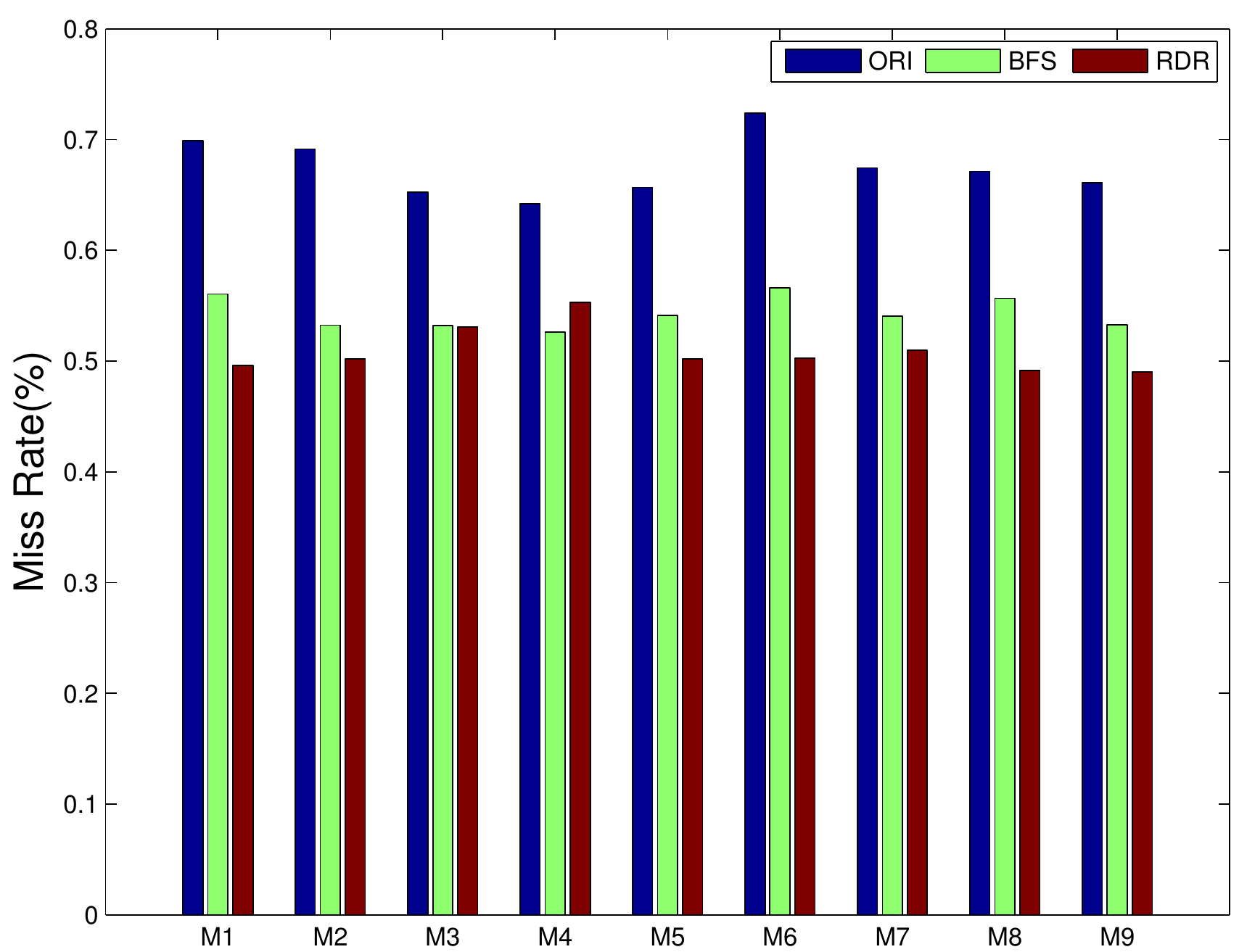} \label{fig:miss.l1}}\\
  \subfloat[L2]
  {\includegraphics[width=0.7\linewidth]{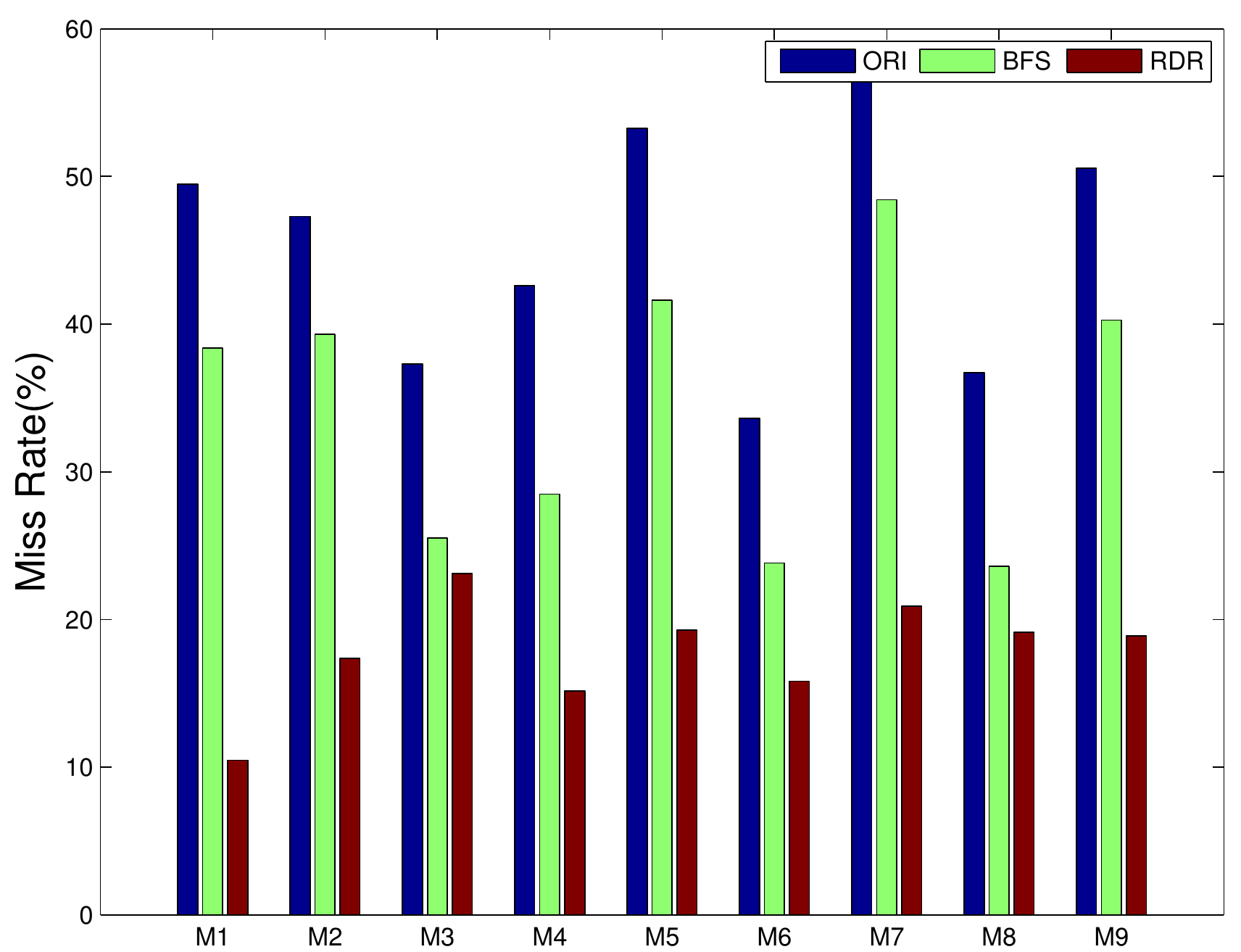} \label{fig:miss.l2}}\\
  \subfloat[L3]
  {\includegraphics[width=0.7\linewidth]{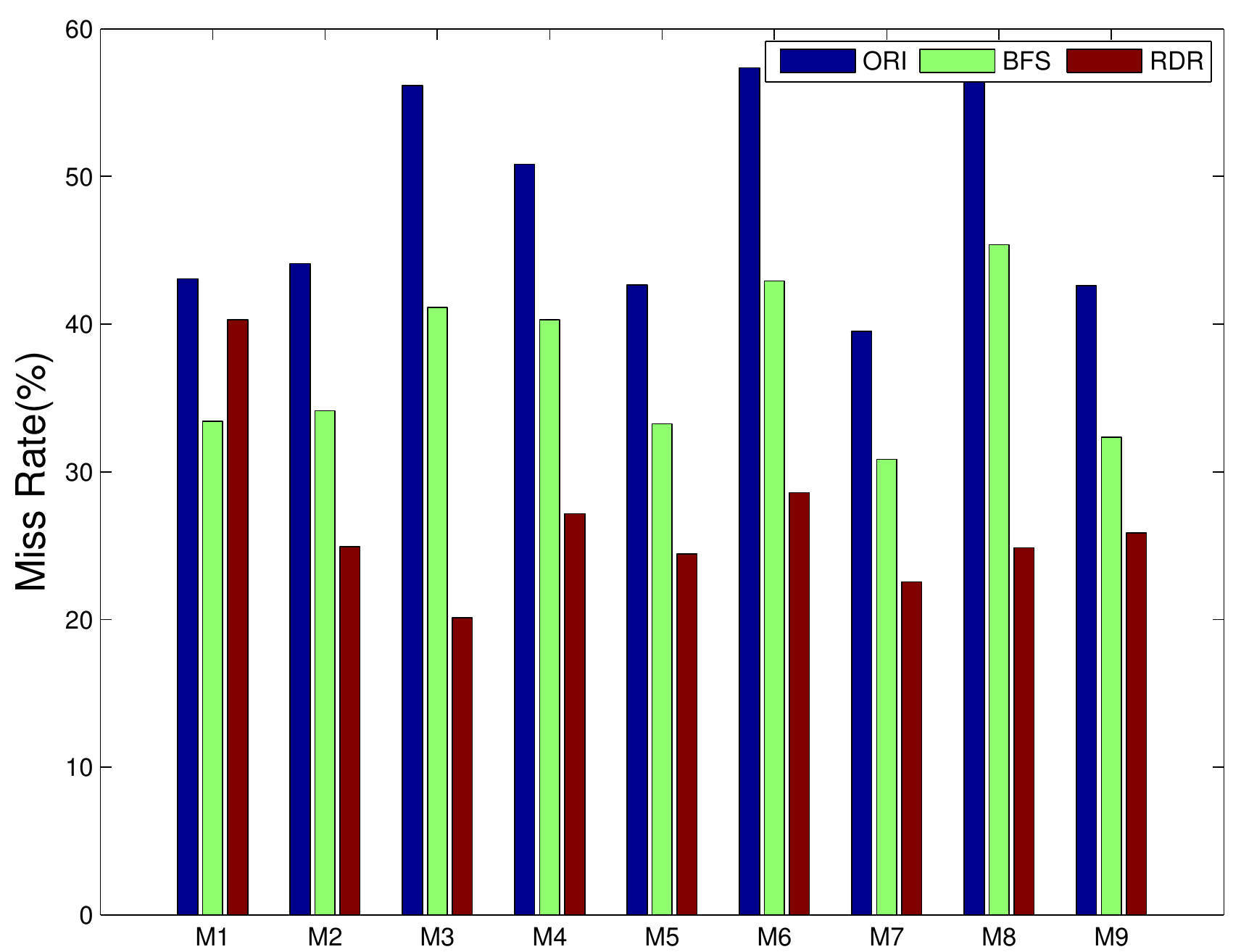} \label{fig:miss.l3}}
\caption{Cache performance results on one core when reordering were applied. The
LX miss rate depends on the number of LX accesses: a higher L2 (resp. L3) miss
rate does not necessarily means higher number of misses, they need to be put
in relation with the number of L1 (resp. L1 and L2) misses.
\label{fig:cachemiss}}
    \vspace{-0.5cm}
\end{figure}

To understand how this affects the execution time, let us call $m_1$ (resp.
$m_2$, $m_3$) the cache miss rates of L1 (resp. L2, L3), and $c_2$ (resp.
$c_3$, $c_m$) the cost of a cache access to L2 (resp. L3, Memory). Then let
\#accesses be the number of data accesses, the additional cost to the execution
time is:
\begin{equation}
(m_1\cdot c_2 + m_1 m_2\cdot c_3 + m_1m_2m_3\cdot c_m)\cdot \text{\#accesses}.
\end{equation}
Indeed, $m_1\cdot \text{\#accesses}$ operations are not found in L1 and are
fetched in L2 (hence an additional cost of $c_2$), out of those, $m_2\cdot
\text{\#accesses}$ are not found in L2 and have to be fetched in L3 etc.

To give an example of what it represents, for Carabiner, with the access costs
of the machine (see Section~\ref{sec:setup}), \texttt{ori} has 927k additional
clock cycles due to cache misses, \texttt{bfs} has 528k, and \texttt{rdr} has
210k.

Similar cache performance is obtained when the Laplacian mesh smoothing runs on
multicores (32 cores).

\subsubsection{Reuse Distance study}
To understand the speedups observed during the serial execution, we study the
reuse distance of the different orderings. To measure the reuse distance,
because this functionality is not offered by PAPI, we did a verbose run noting
the data locations being addressed and analyzed them.

We present some of the quantiles observed in Table~\ref{tab:quantiles}. The $X$
quantile is defined as the smallest value such that there at least a proportion
$X$ of the population below it.
\begin{table*}[h!]
\centering
\begin{tabular}{|l|c|rrrr|c|}
\cline{3-7}
\multicolumn{2}{c|}{}&\multicolumn{4}{c|}{Quantiles}& \#accesses\\
\hline
mesh & {\centering Ordering} & {\centering $50\%$} &  {\centering $75\%$} &  {\centering $90\%$} &  {\centering $100\%$} &\\
\hline
	&	\ori	&	8	&	52	&	1,168	&	1,924,021	&		\\
carabiner	&	\bfs	&	1	&	11	&	99	&	1,923,989	&	15,566,520	\\
	&	\rdr	&	1	&	4	&	6	&	1,942	&		\\
\hline													
	&	\ori	&	8	&	43	&	642	&	1,767,468	&		\\
crake	&	\bfs	&	1	&	11	&	80	&	1,767,488	&	14,226,264	\\
	&	\rdr	&	1	&	4	&	6	&	3,903	&		\\
\hline													
	&	\ori	&	7	&	39	&	306	&	1,819,234	&		\\
dialog	&	\bfs	&	1	&	10	&	79	&	1,803,850	&	14,614,336	\\
	&	\rdr	&	1	&	5	&	11	&	6,198	&		\\
\hline													
	&	\ori	&	7	&	38	&	270	&	2,224,176	&		\\
lake	&	\bfs	&	1	&	10	&	72	&	2,224,069	&	17,850,952	\\
	&	\rdr	&	1	&	4	&	5	&	8,774	&		\\
\hline													
	&	\ori	&	7	&	37	&	1,430	&	1,963,783	&		\\
riverflow	&	\bfs	&	1	&	10	&	69	&	1,969,058	&	15,758,200	\\
	&	\rdr	&	1	&	4	&	6	&	5,429	&		\\
\hline													
	&	\ori	&	7	&	38	&	3,866	&	2,312,947	&		\\
ocean	&	\bfs	&	1	&	10	&	76	&	2,339,305	&	18,719,512	\\
	&	\rdr	&	1	&	5	&	9	&	6,122	&		\\
\hline													
	&	\ori	&	7	&	44	&	358	&	1,817,752	&		\\
stress	&	\bfs	&	1	&	10	&	91	&	1,817,758	&	14,862,888	\\
	&	\rdr	&	1	&	4	&	6	&	4,809	&		\\
\hline													
	&	\ori	&	7	&	40	&	646	&	1,780,959	&		\\
valve	&	\bfs	&	1	&	10	&	75	&	1,780,916	&	14,301,320	\\
	&	\rdr	&	1	&	4	&	6	&	7,256	&		\\
\hline													
	&	\ori	&	7	&	42	&	376	&	2,290,783	&		\\
wrench	&	\bfs	&	1	&	10	&	89	&	2,290,734	&	18,429,528	\\
	&	\rdr	&	1	&	4	&	6	&	6,487	&		\\
\hline
\end{tabular}
\caption{Distribution of the reuse distances of the first iterations of each run
as a function of the meshes and the orderings.\label{tab:quantiles}}
\end{table*}

What is notable is that for most of the data-accesses, the reuse distance is
well below that of the L1 cache: as an order of magnitude, assuming that each
node is 66 bytes\footnote{A node is characterized by (i) its coordinates: two
double precision floats, (ii) its connectivity: an array of long integers
(typically here five or six neighbors) and (iii) a fixed/boundary state: an
integer that says whether or not the vertex is an interior one~\cite{mesquite}.
Note that this is an approximation as other factors have to be accounted for
(for instance the data-structure used to store the node information), in
practice the size can be many more times this.},
and that we use the theoretical cache miss model (Section~\ref{sec:reuse}); then
below a reuse distance of 496 (resp. 3970; 372,000) there should not be any L1
(resp. L2; L3) cache miss. This is consistent with the findings that the
L1-cache miss rate is very low for all orderings (see
Figure~\ref{fig:cachemiss}).

It is however important to notice that the maximum reuse distance of \rdr is
well below the theoretical reuse distance for a L3 cache miss (at least 42 times
smaller). Intuitively, even if our approximation is only a first order
approximation, this means that none of the L3 cache misses observed by PAPI in
\rdr are due to elements being reused. Most certainly they are either compulsory
cache misses or due to the first fetching of a given element.

Based on the measured cache miss rates, we have been able to estimate the actual
number of misses which we represent in Table~\ref{tab:number_miss} (Estim.
number of misses).
We have seen in Table~\ref{tab:quantiles} that in practice the
L3 cache misses observed in \rdr were not results of the reuse distance but of
other factors, hence we could compute them for all graphs\footnote{In practice, when
computing this number, we were able to verify that it was almost constant for
all graphs (between 400 and 500), hence confirming the intuition that it was due
to external factors.} and subtract them from the total number of L1, L2 and
L3 misses for all orderings.\footnote{Results were similar when we did not
perform this operation.} 
The results obtained allow us to go further in the understanding of the behavior
of the different algorithms.

\begin{table}[!h]
\centering
\resizebox{\linewidth}{!}{
\begin{tabular}{|l|c|rrr|rrr|}
\cline{3-8}
\multicolumn{2}{c|}{}&\multicolumn{3}{c|}{Estim. number}&\multicolumn{3}{c|}{Estim. max number}\\
\multicolumn{2}{c|}{}&\multicolumn{3}{c|}{of misses (x$10^3$)}&\multicolumn{3}{c|}{of elements (x$10^3$)}\\
\hline
mesh & {\centering Ordering} & {\centering L1} &  {\centering L2} &  {\centering L3}& {\centering L1} &  {\centering L2} &  {\centering L3} \\
\hline
	&	\ori	&	106	&	50.6	&	19.9	&	13.2	&	21.3	&	330	\\
carabiner	&	\bfs	&	84	&	30.2	&	7.94	&	10.2	&	21.2	&	1060	\\
	&	\rdr	&	73.9	&	4.82	&	0	&	1.6	&	1.88	&	1.94	\\
\hline															
	&	\ori	&	95.3	&	43.4	&	17.4	&	24.6	&	40.9	&	198	\\
crake	&	\bfs	&	72.6	&	26.7	&	7.07	&	18.3	&	39.2	&	986	\\
	&	\rdr	&	68.3	&	9.3	&	0	&	3.4	&	3.77	&	3.9	\\
\hline															
	&	\ori	&	91.7	&	31.9	&	16.3	&	59	&	87.7	&	108	\\
dialog	&	\bfs	&	74.1	&	16.2	&	4.55	&	53.2	&	89.3	&	157	\\
	&	\rdr	&	74	&	14.3	&	0	&	5.84	&	6.05	&	6.2	\\
\hline															
	&	\ori	&	111	&	44.7	&	20.7	&	62.2	&	95.9	&	143	\\
lake	&	\bfs	&	89.8	&	22.7	&	6.72	&	56.1	&	96.1	&	219	\\
	&	\rdr	&	94.7	&	10.9	&	0	&	8.42	&	8.65	&	8.77	\\
\hline															
	&	\ori	&	99.8	&	51.4	&	19.8	&	30.4	&	66.8	&	863	\\
riverflow	&	\bfs	&	81.6	&	31.8	&	8.08	&	20.3	&	67.6	&	1290	\\
	&	\rdr	&	75.4	&	11.5	&	0	&	4.73	&	5.27	&	5.43	\\
\hline															
	&	\ori	&	131	&	41.3	&	21.9	&	63.3	&	300	&	1050	\\
ocean	&	\bfs	&	102	&	21	&	6.57	&	41.6	&	770	&	1830	\\
	&	\rdr	&	89.8	&	10.6	&	0	&	5.83	&	5.99	&	6.12	\\
\hline															
	&	\ori	&	96.6	&	54.6	&	19.4	&	71.5	&	91.7	&	130	\\
stress	&	\bfs	&	76.7	&	35.3	&	8.42	&	59.1	&	87.6	&	137	\\
	&	\rdr	&	72.2	&	12.3	&	0	&	4.36	&	4.66	&	4.81	\\
\hline															
	&	\ori	&	92.6	&	31.9	&	16.9	&	37.3	&	77.6	&	186	\\
valve	&	\bfs	&	76.3	&	15.4	&	5.17	&	28.6	&	110	&	1040	\\
	&	\rdr	&	67	&	10.1	&	0	&	6.68	&	7.17	&	7.26	\\
\hline															
	&	\ori	&	117	&	57.1	&	21.8	&	75.3	&	105	&	148	\\
wrench	&	\bfs	&	93.8	&	35.1	&	8.38	&	62.9	&	103	&	170	\\
	&	\rdr	&	86	&	12.7	&	0	&	5.69	&	6.31	&	6.49	\\
\hline
\end{tabular}
}
\caption{Estimated number of misses for the different levels of cache. Based on
those, we estimated the maximum number of elements (using reuse distance)
that could fit in the caches.\label{tab:number_miss}}
\end{table}

In order to do this, we again used the theoretical model introduced in
Section~\ref{sec:reuse} to measure the maximum number of elements that fit in
the cache. We formulate this model:\\ 
\emph{Assuming that there are $n_X$ LX misses, then the $n_X$ accesses with the
largest reuse distance are the one that missed}.

With this approximation we measured the number of elements that could fit in the
cache for each graph and each ordering. We report those values in
Table~\ref{tab:number_miss} (section Estim. max number of elements).
The first observation is that in general, given a graph, for both orderings \ori
and \bfs, the estimated maximum number of elements that fit the different cache
are similar (same order of magnitude). This makes sense as for a given graph the
elements have the same size\footnote{Note that this is not as true for
\texttt{ocean} and \texttt{valve}. We have no explanation for these graphs.}
--not necessarily 66 bytes which was an approximation--.

This is particularly visible for the L2 cache. L1 is smaller, hence it is more
sensible to the cache optimizations and hence to the first order approximation. 
On the contrary, the number of accesses in L3 are smaller, hence they are more
sensible to the noise incurred by other elements of the computation that are not
reported by PAPI. The number of accesses of L2 and its size are the right
balance to verify our approximation.

The second observation, is that the estimated number of elements that fit the
L2 cache for \rdr is many orders of magnitude below that of the other
orderings. On the contrary it is very close to that of the L1 cache miss of
\rdr. Intuitively, this tends to give the idea that there are actually no L2
cache miss with the ordering, and that the L2 cache misses are, similarly to the
L3 cache misses linked to other factors (for this study we only take into
account the reuse distance).

This would show that the \rdr ordering not only does not have any L3 cache
misses, but also have very few L2 cache misses, making it quasi-optimal.

\subsection{Performance Scalability}
Finally, we have studied the scalability of the reordering. In order to do so
we have run the Laplacian mesh smoothing algorithm on 1, 2, 4, 8, 16, 24 and 32
cores.

In Figure~\ref{fig:speedup}, we compare the different speedups per mesh. Each
speedup is relative to a serial baseline (the execution time of \ori on one
core), hence given by the formula:
\[
\text{Speedup}(\texttt{ordering},p) = \frac{T_{\ori}(1)}{T_{\texttt{ordering}}(p)}
\]
\begin{figure}[!h]
  \centering
  \subfloat[1 core]
  {\includegraphics[width=0.48\linewidth]{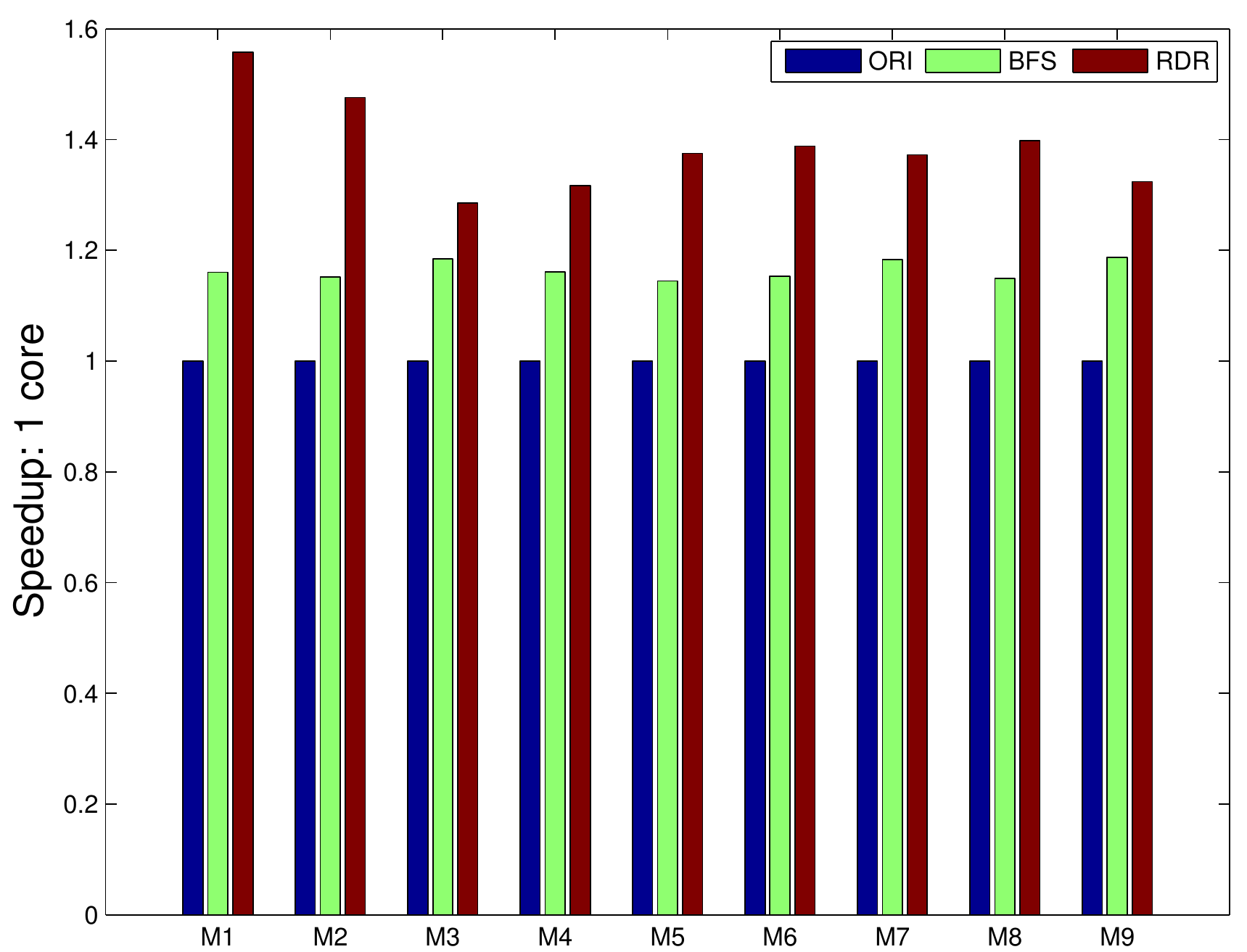}}
  \subfloat[2 cores]
  {\includegraphics[width=0.48\linewidth]{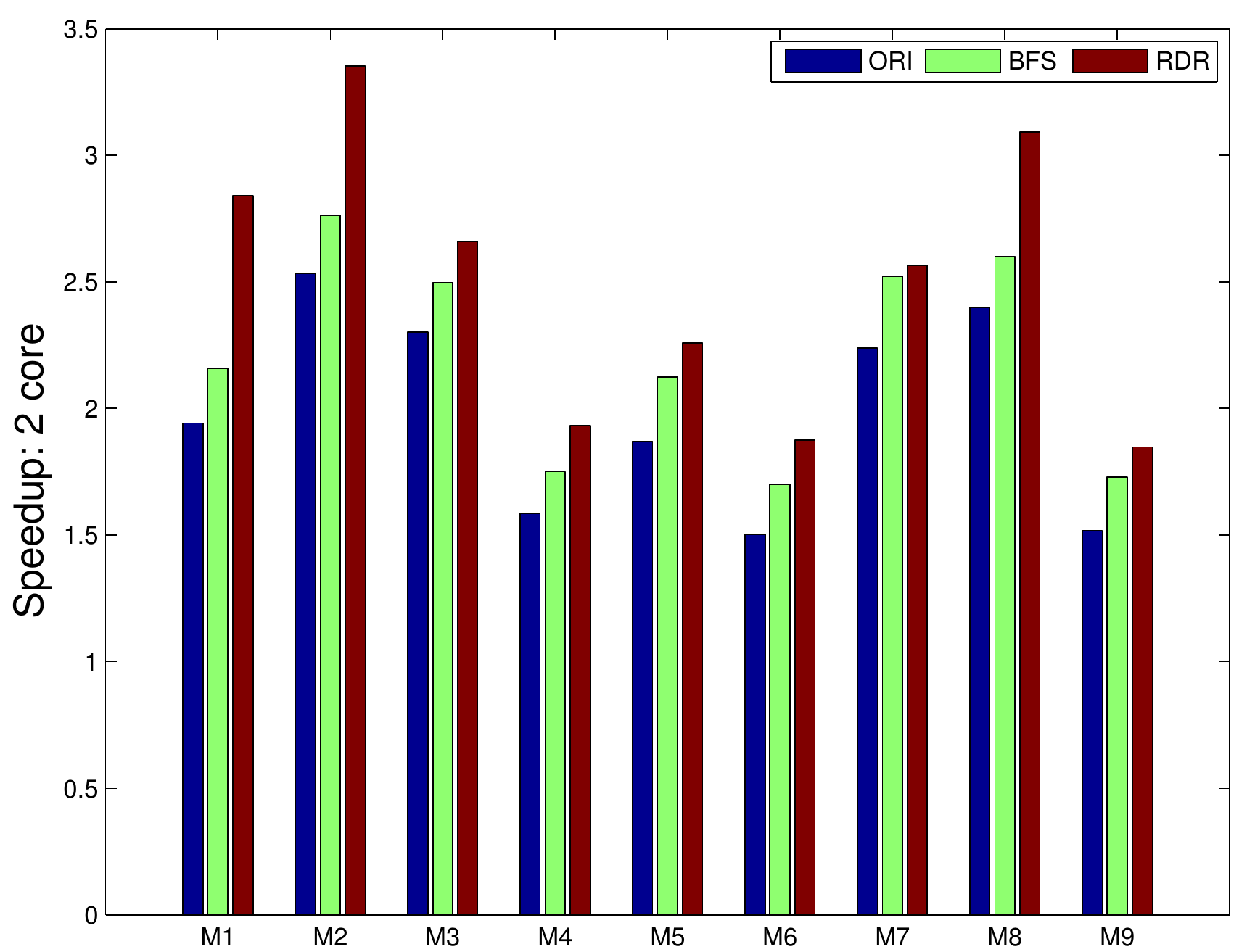}} \\
  \subfloat[4 cores]
  {\includegraphics[width=0.48\linewidth]{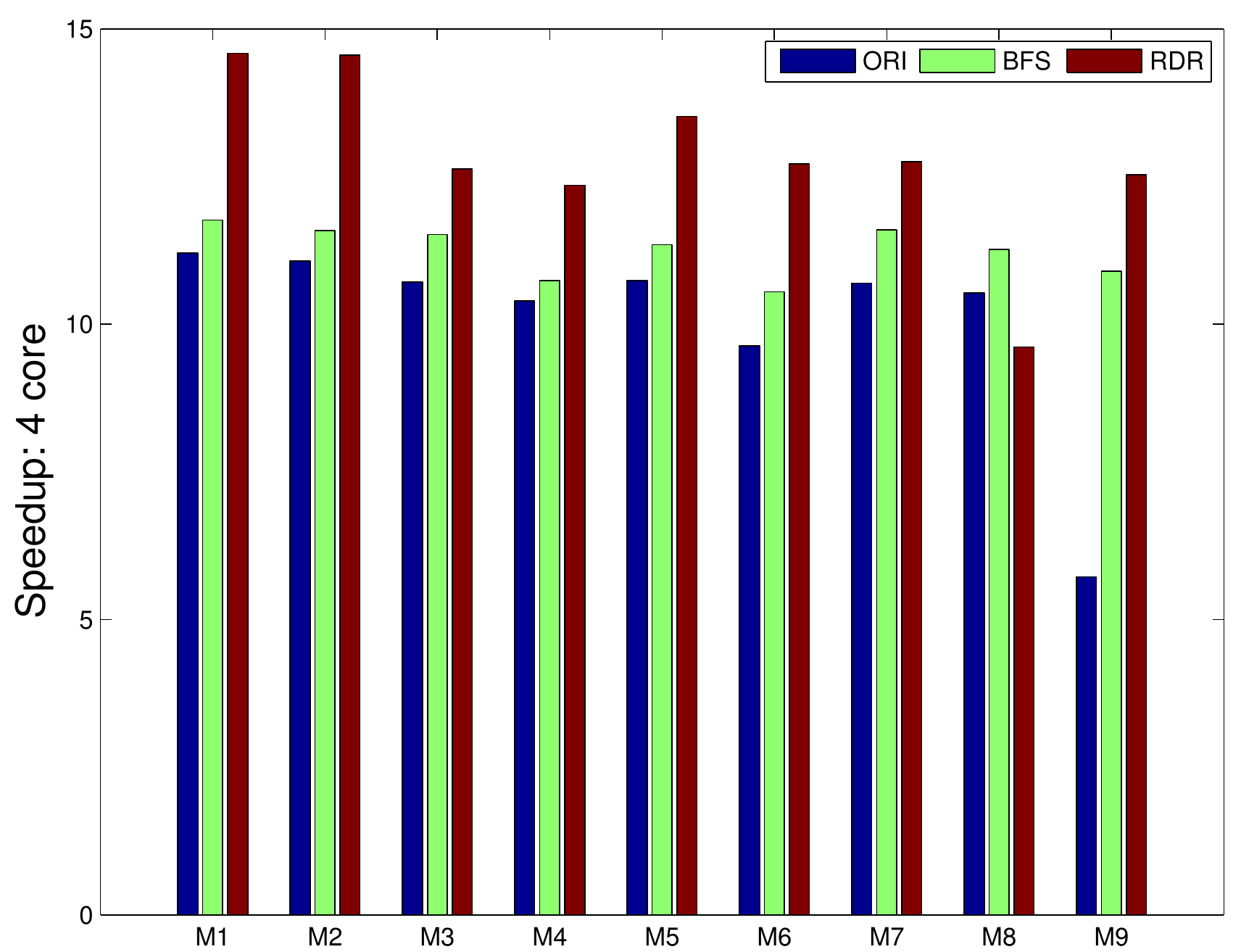}}
  \subfloat[8 core]
  {\includegraphics[width=0.48\linewidth]{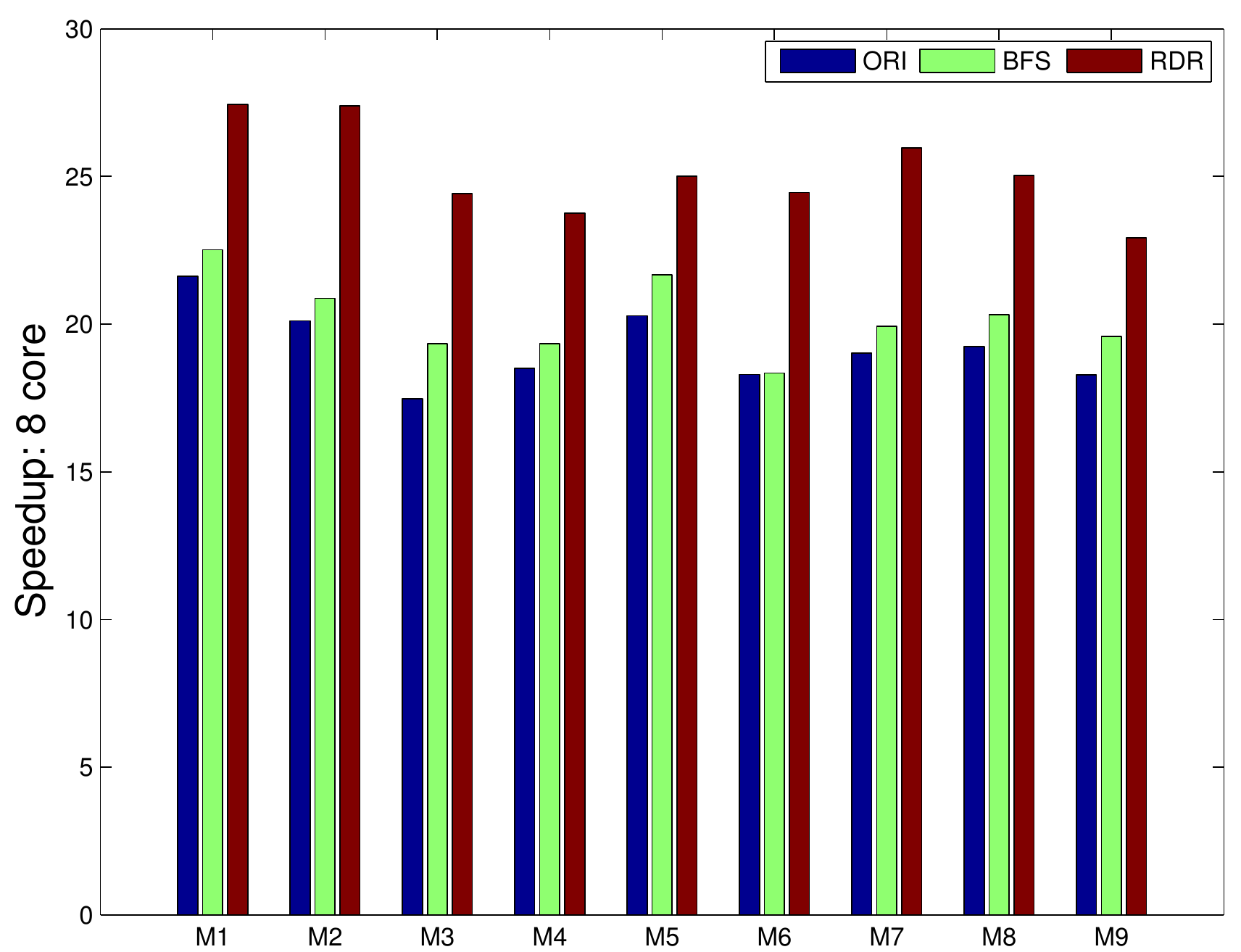}}\\
  \subfloat[16 cores]
  {\includegraphics[width=0.48\linewidth]{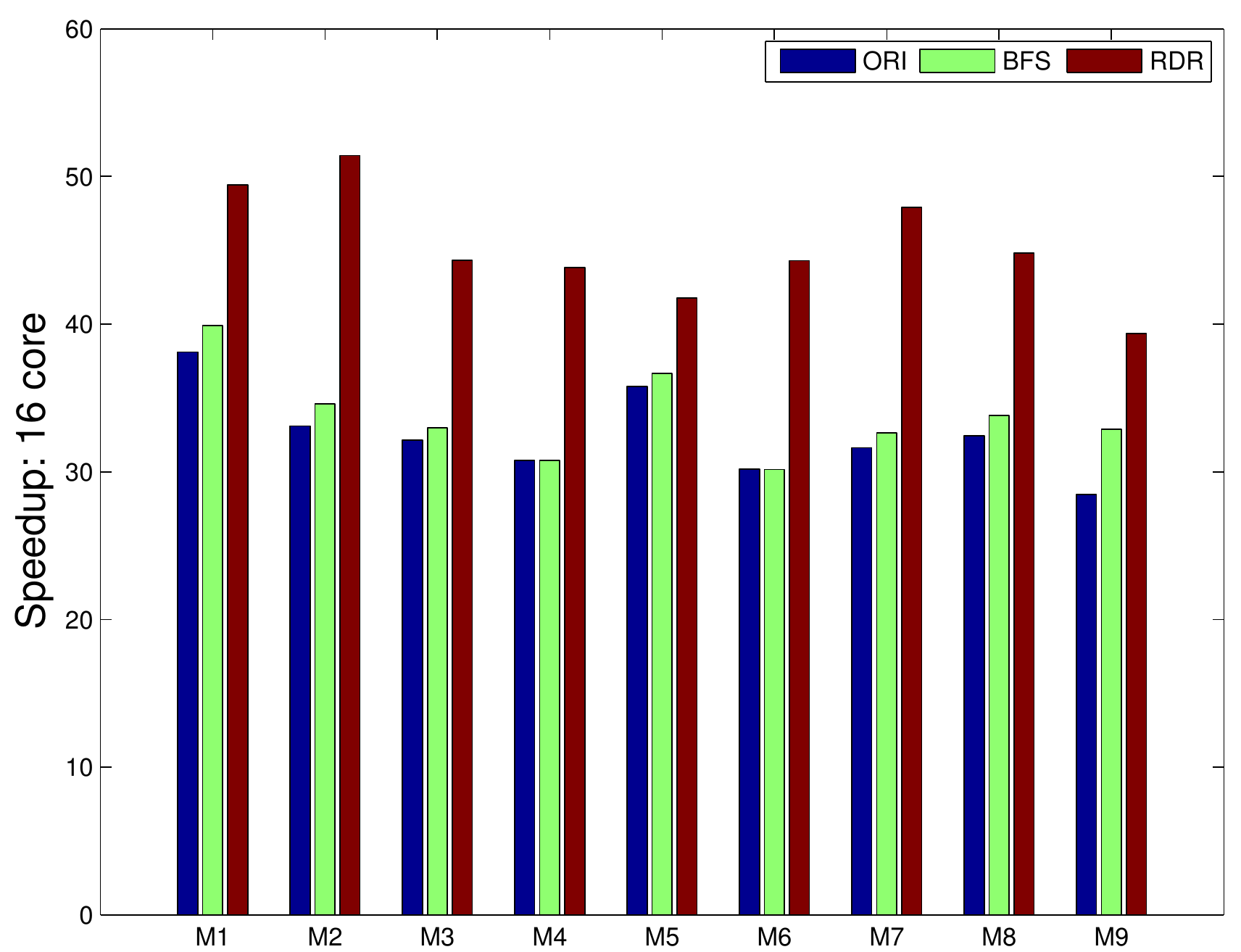}}
  \subfloat[24 cores]
  {\includegraphics[width=0.48\linewidth]{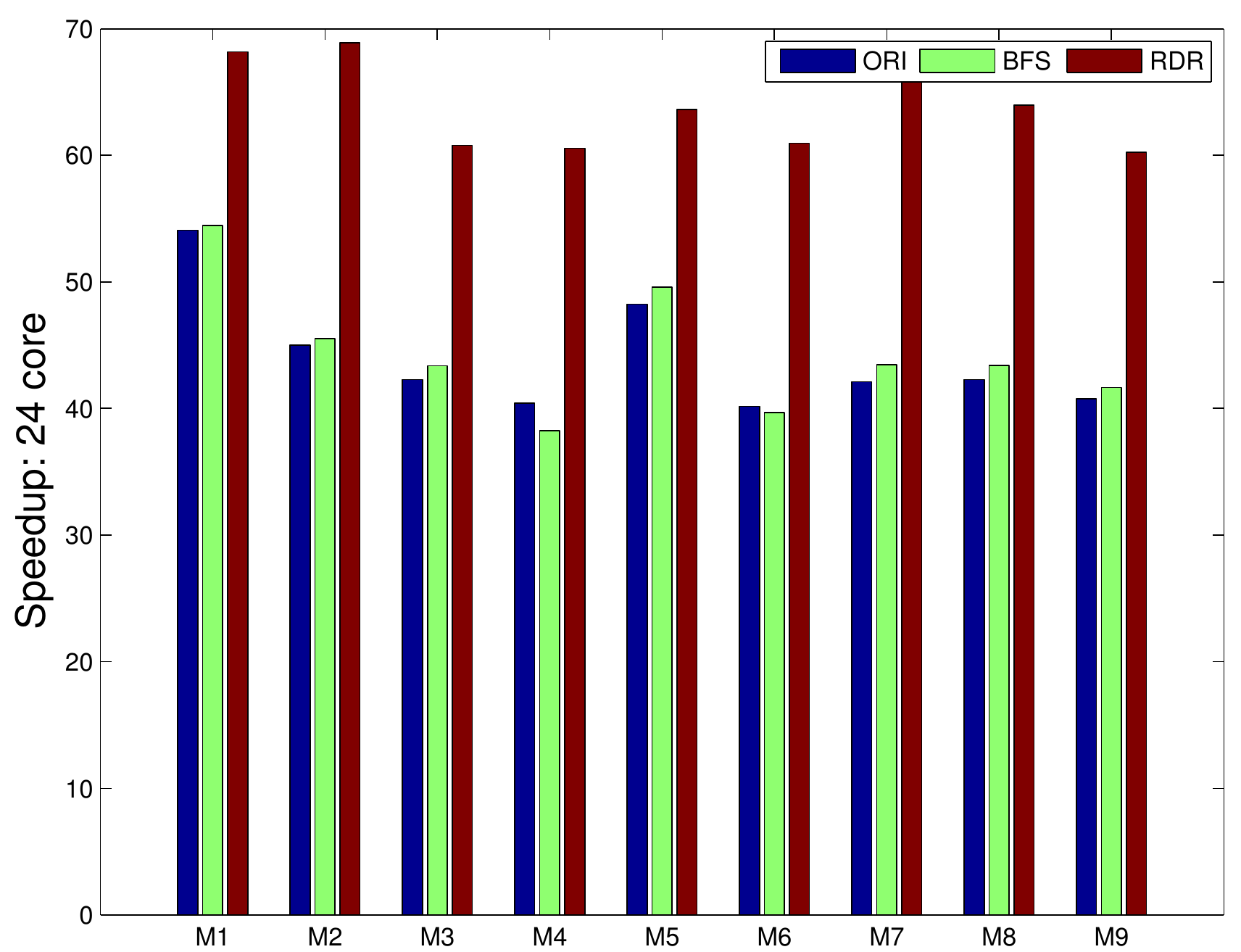}}\\
  \subfloat[32 cores]
  {\includegraphics[width=0.48\linewidth]{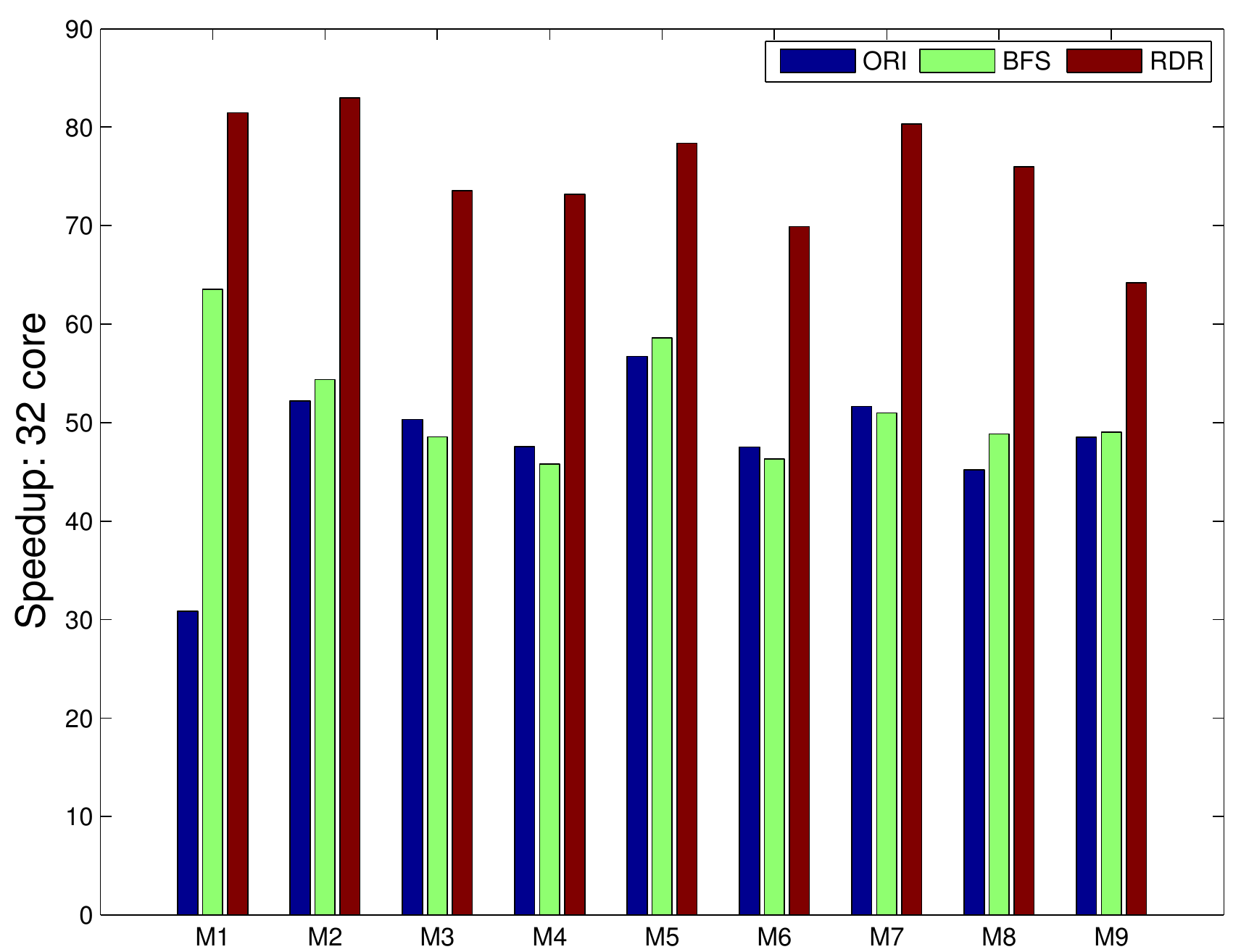}}
  \caption{Observed speedup relative to the serial \ori baseline. Reuse distance reducing ordering (\rdr) provides significant performance improvement. }
  \label{fig:speedup}
  \vspace{-0.5cm}
\end{figure}

The first notable result is that even with only the original ordering (\ori) the
speedup is supra-linear. One way to explain this result would be a better data
management when using multiple cores. We plot the number of additional accesses
for the three first meshes Carabiner, Crake and Dialog in
Figure~\ref{fig:scal_accesses}. Results are similar for the remaining meshes.
\begin{figure}[!h]
\includegraphics[width=\linewidth]{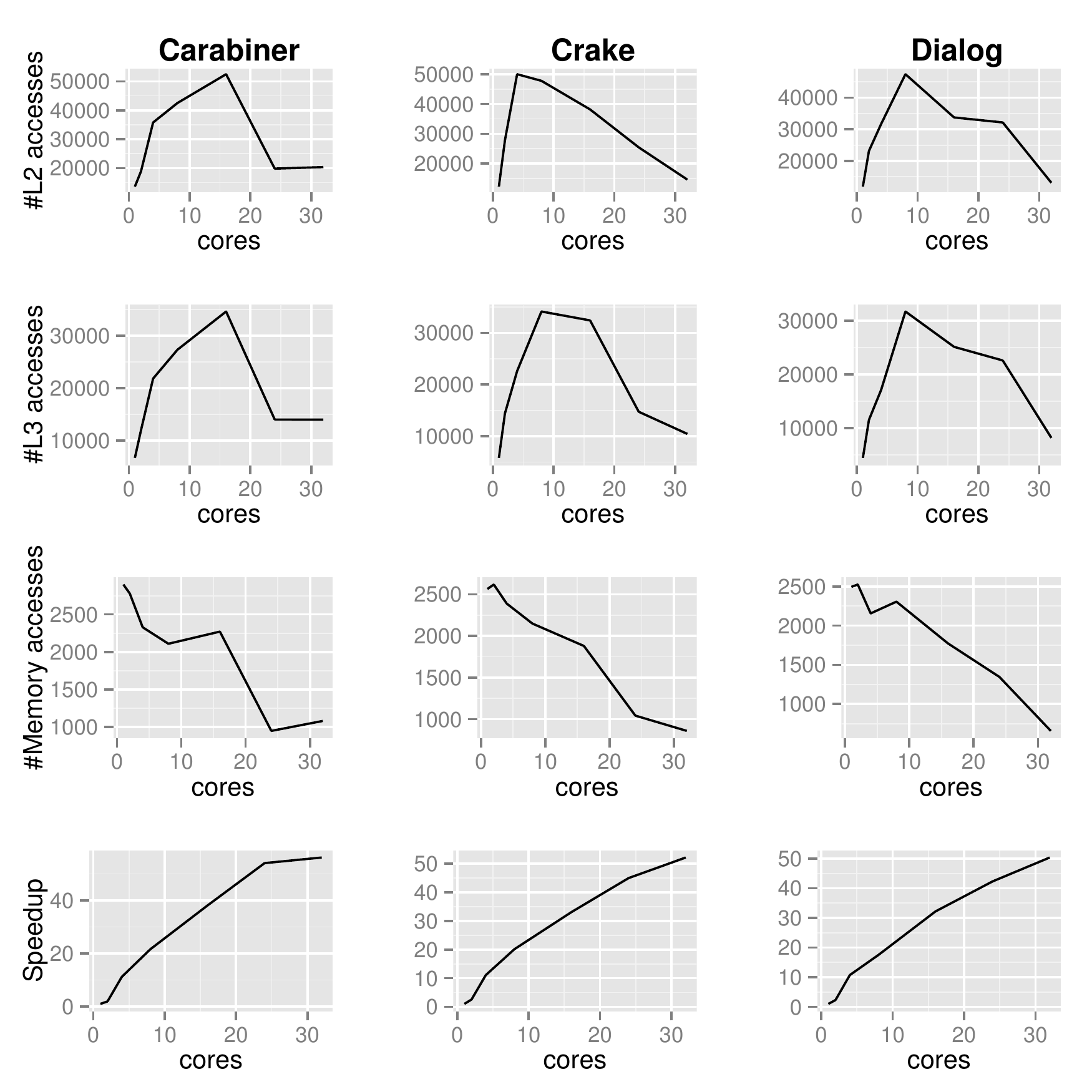}
  \caption{Number of accesses as a function of the number of cores used
for \ori for different meshes. Overall the distance where the data is fetched
decreases with the number of cores.}
\label{fig:scal_accesses}
\end{figure}
We can observe that with the scalability, the distance with respect of the data
accessed decreases. This could explain part of the superlinear speedup
observed, in particular, we can see that the fluctuations of the slopes of the
speedup of both Crake and Dialog seem to follow the fluctuations of the number
of memory accesses slope.
When we compare the speedup per ordering (that is: $T_{ord}(1)/T_{ord}(p)$),
both \bfs and \rdr also have the same superlinear speedup than \ori. Hence we
suspect that this superlinear speedup has more to do with the architecture (for
instance with less than four threads, they might be distributed in a
``scattered'' way, leading to four times the L3 caches from one to four cores)
or the LMS than with the reorderings (or lack of). The fact that (i) this
observation is independent of the ordering, and (ii) the speedup becomes more
``normal'' from 4 to 32 cores (about 7) increases the likeliness of this
hypothesis.

\begin{figure}
\centering
\includegraphics[width=0.8\linewidth]{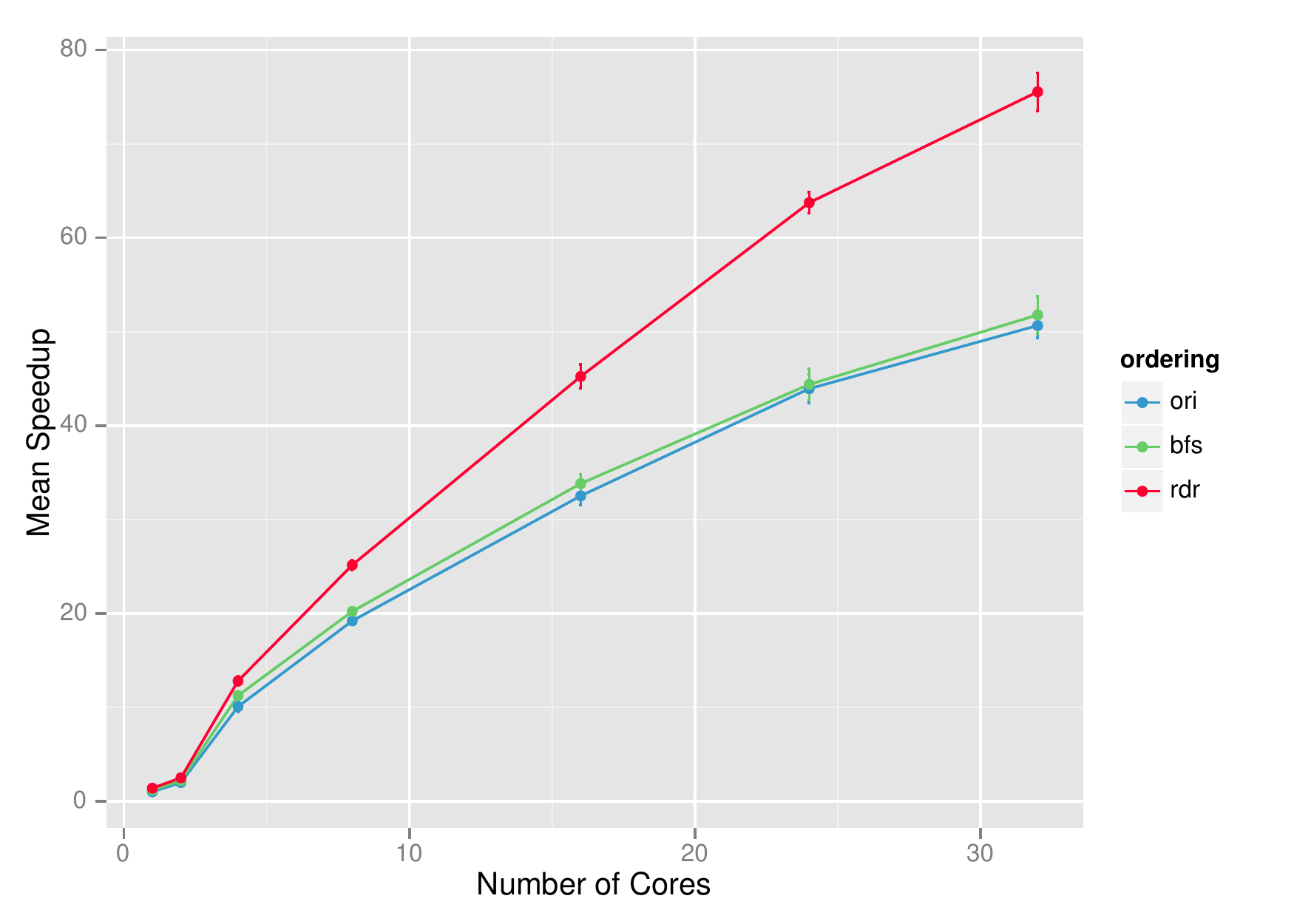}
  \caption{Mean speedup versus $T_{\ori}(1)$.\label{fig:scal_speedup}}
\vspace{-0.5cm}
\end{figure}

The second notable result is that the speed-up gained by our algorithm is 
greater than the \bfs ordering on almost all data sets. Furthermore, the average
speed-up is clearly dominant as can be seen in Figure~\ref{fig:scal_speedup},
with a speed-up greater than 75 when \rdr is applied on 32 cores!

\begin{figure}
\centering
\includegraphics[width=0.9\linewidth]{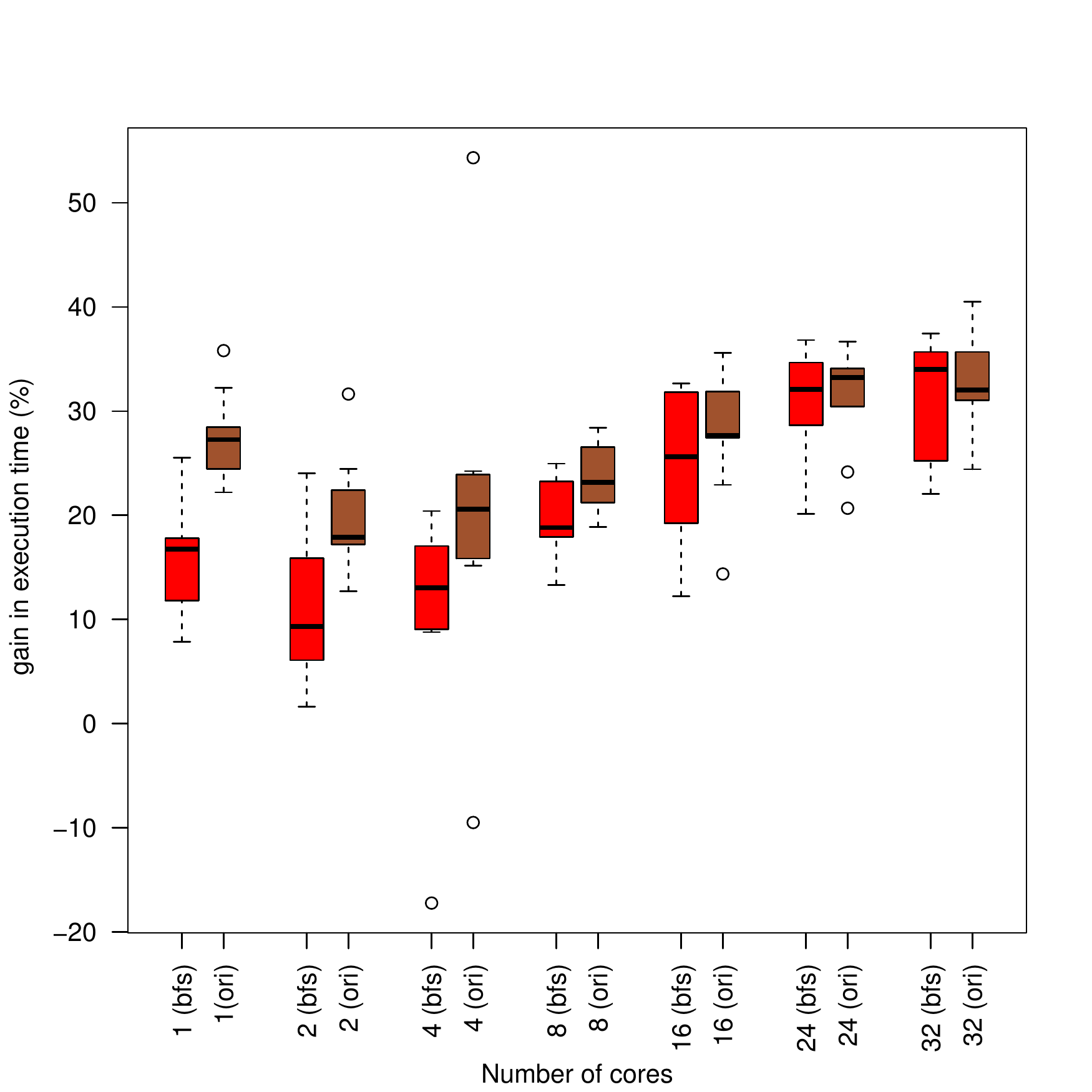}
  \caption{Gain with scalability the performance gain is
$\frac{T_{\texttt{algo}}(x)-T_{\rdr}(x)}{T_{\texttt{algo}}(x)}$, for
$\texttt{algo}$ being either \ori of \bfs and $x$ being the number of cores.\label{fig:gain}}
\vspace{-0.5cm}
\end{figure}

Finally, we plot on Figure~\ref{fig:gain} the gain in execution time of \rdr
compared to the execution time of \ori and \bfs. Clearly \rdr is dominant on
most of the meshes and not only on average. The only exception is the execution
of Valve on 4 cores. Overall, the expected gain over the \ori ordering varies
between 20\% and 30\% depending on the number of cores, and between 10\% and
30\% compared to the \bfs reordering.

\subsection{Discussion on reordering cost}

To conclude this experimental evaluation, we discuss the additional cost
incurred by the pre-computation (reordering). Because it follows so closely the
actual mesh smoothing algorithm, our reordering has a cost of approximatively
one iteration with the \ori ordering.

Hence, with an average gain between 20 and 30\% depending on the number of cores
(Figure~\ref{fig:gain}) over the computation with the \ori ordering, it follows
that the gain will be notable as soon as there are more than four iterations. To
conclude, we would not recommend doing any reordering when the initial mesh
quality is close to the desired mesh quality as one should not expect many
iterations. However if it is not the case, one should clearly use the
reordering we designed.

\section{Conclusion}
	\label{sec:conclusion}
	
In this work, we have presented a pre-computation heuristic for Laplacian mesh
smoothing. Our pre-computation takes the form of a reordering of the initial
data and performs remarkably well compared to a state of the art reordering
heuristic.

Our reordering is based on an observation we made, the reuse distance of the
mesh smoothing algorithm do not vary much over iterations. We hypothesized that
a good order for the first iteration would be efficient for subsequent
iterations.
We have developed and evaluate \rdr, a scheme that takes into account the
initial qualities of each vertex and reorders the mesh based on the quality in
ascending order to improve both temporal and spatial localities for memory
accesses of Laplacian mesh smoothing.

Thanks to our reordering, we decrease the number of cache misses of the
Laplacian Mesh Smoothing by 25\% (L1), 71\% (L2) and 84\% (L3) on average on a
single core. Similar cache performance is obtained when we run the application
on multicores (up to 32 cores). 
In turn, this decrease in cache miss allowed us to reach a mean speed-up of 75
when running on 32 cores compared to the single core performance with no
reordering, and a gain in execution time of 30\% compared to the 32-cores \ori
execution with as little as eight iterations of the mesh smoothing algorithm.
We were able to justify those gains by showing that our algorithm is
quasi-optimal with regard to the number of L2 and L3 misses.

By modifying almost nothing in the original algorithm, we have shown how much
cache-misses impact the execution time of the Laplacian mesh smoothing
algorithm. We expect our new reuse-distance-aware algorithm to outperform
extensions of Laplacian mesh smoothing as well.
We conjecture that either this ordering or an ordering based on the idea 
that it needs to be efficient for the first iteration, could improve other mesh
application performances such as mesh untangling~\cite{optimization2},
constraint mesh smoothing~\cite{optimization3}, and mesh
swapping~\cite{swapping}.

Finally, this result shows that data-locality is critical in the execution of 
applications, and that a short pre-computation may improve drastically the
execution of an application.

\section*{Acknowledgements}

Part of this work was done while the authors were at the Pennsylvania State
University. This material is based upon work supported by Vanderbilt University
and the National Science Foundation, CCF \#1319448.

\bibliographystyle{abbrv}
\bibliography{biblio}

\end{document}